\newcommand{\lleft}{\left}
\newcommand{\rright}{\right}
\newcommand{\rrvert}{\vert}
\newcommand{\llvert}{\vert}
\newcommand{\e}{\mathbb{E}}
\newcommand{\law}{\stackrel{\text{law}}{=}}
\newcommand{\ph}{\phi^{(H)}}
\newcommand{\ha}{\hat{\gamma}_T}
\newcommand{\bo}{{\hat{\pmb{\gamma}}}}
\newcommand{\ii}{\mathbh{1}_{g\left(\bo_T\right)>0}}
\newcommand{\iii}{\mathbh{1}_{\ha(N) \neq0}}
\newtheorem{defi}{Definition}[]
\newtheorem{lemma}[]{Lemma}
\newtheorem{theorem}[]{Theorem}
\newtheorem{kor}[]{Corollary}
\newtheorem{rem}[]{Remark}
\newtheorem{ex}[]{Example}
\begin{document}

\begin{frontmatter}
\pretitle{Research Article}

\title{On model fitting and estimation of strictly stationary processes}

\author[a]{\inits{M.}\fnms{Marko}~\snm{Voutilainen}\thanksref{cor1}\ead[label=e1]{marko.voutilainen@aalto.fi}}
\thankstext[type=corresp,id=cor1]{Corresponding author.}
\author[b]{\inits{L.}\fnms{Lauri}~\snm{Viitasaari}\ead[label=e2]{lauri.viitasaari@iki.fi}}
\author[a]{\inits{P.}\fnms{Pauliina}~\snm{Ilmonen}\ead[label=e3]{pauliina.ilmonen@aalto.fi}}

\address[a]{Department of Mathematics and Systems Analysis\\
\institution{Aalto University School of Science}\\
P.O. Box 11100, FI-00076 Aalto, \cny{Finland}}
\address[b]{Department of Mathematics and Statistics\\
University of Helsinki\\
P.O. Box 68, FI-00014 \institution{University of Helsinki}, \cny{Finland}}

%\thankstext[id=f1]{}

%\dedicated{}

%\markboth{Authors}{Title}
\markboth{M. Voutilainen et al.}{On model fitting and estimation of strictly stationary processes}

\begin{abstract}
Stationary processes have been extensively studied in the literature.
Their applications
include modeling and forecasting numerous real life phenomena such as natural
disasters, sales and market movements. When stationary processes are
considered, modeling is traditionally based on fitting an
autoregressive moving average (ARMA) process. However, we challenge
this conventional approach. Instead of fitting an ARMA model, we apply
an AR(1) characterization in modeling any strictly stationary
processes. Moreover, we derive consistent and asymptotically normal
estimators of the corresponding model parameter.
\end{abstract}
\begin{keywords}
\kwd{AR(1) representation}
\kwd{asymptotic normality}
\kwd{consistency}
\kwd{estimation}
\kwd{strictly stationary processes}
\end{keywords}
\begin{keywords}[2010]% [PACS], [JEL]
\kwd{60G10}
\kwd{62M09}
\kwd{62M10}
\kwd{60G18}
\end{keywords}

\received{\sday{13} \smonth{9} \syear{2017}}% Updated by
%VTEXPTS2LaTeX.exe, 29.11.2017 09:42
\revised{\sday{22} \smonth{11} \syear{2017}}% Updated by
%VTEXPTS2LaTeX.exe, 29.11.2017 09:42
\accepted{\sday{25} \smonth{11} \syear{2017}}% Updated by
%VTEXPTS2LaTeX.exe, 29.11.2017 09:42
\publishedonline{\sday{22} \smonth{12} \syear{2017}}
\end{frontmatter}

\section{Introduction}
Stochastic processes are widely used in modeling and forecasting
numerous real life phenomena such as natural disasters, activity of the
sun, sales of a company and market movements, to mention a few.
When stationary processes are considered, modeling is traditionally
based on fitting an autoregressive moving average (ARMA) process.
However, in this paper, we challenge this conventional approach.
Instead of fitting an ARMA model, we apply the AR$(1)$ characterization
in modeling any strictly stationary processes. Moreover, we derive
consistent and asymptotically normal estimators of the corresponding
model parameter.

%One of the most essential families of stationary processes consists of
%autoregressive moving average (ARMA) processes defined in terms of
%linear difference equations.
One of the reasons why ARMA processes have been in a central role in
modeling of time-series data is that for every autocovariance function
$\gamma(\cdot)$ vanishing at infinity and for every $n\in\mathbb{N}$
there exists an ARMA process $X$ such that $\gamma(k) = \gamma_X(k)$
for every $k = 0,1,..,n$. For a general overview of the theory of
stationary ARMA processes and their estimation, the reader may consult
for example \citep{brockwell} or \citep{hamilton1994time}.

ARMA processes, and their extensions, have been studied extensively in
the literature.
A direct proof of consistency and asymptotic normality of Gaussian
maximum likelihood estimators for causal and invertible ARMA processes
was given in \citep{yao2006gaussian}. The result was originally
obtained, using asymptotic properties of the Whittle estimator, in
\citep{hannan1973asymptotic}. The estimation of the parameters of
strictly stationary ARMA processes with infinite variances was studied
in \citep{miko}, again, by using Whittle estimators. Portmanteau tests
for ARMA models with stable Paretian errors with infinite variance were
introduced in \citep{lin2008portmanteau}. An efficient method for
evaluating the maximum likelihood function of stationary vector ARMA
models was presented in \citep{mauricio1995exact}. Fractionally
integrated ARMA models with a GARCH noise process, where the variance
of the error terms is also of ARMA form, was studied in \citep
{ling1997fractionally}. Consistency and asymptotic normality of the
quasi-maximum likelihood estimators of ARMA models with the noise
process driven by a GARCH model was shown in \citep{francq2004maximum}.
A least squares approach for ARMA parameter estimation has been studied
at least in \citep{koreisha1990generalized} by contrasting its
efficiency with the maximum likelihood estimation.
Also estimators of autocovariance and their limiting behavior have been
addressed in numerous papers. See for example \citep{davis1986limit,
horvath2008sample, levy2011robust} and \citep
{mcelroy2012subsampling}.

%The limit distribution of the sample autocovariance function of linear
%processes, including stationary ARMA processes, was studied in
%\cite{davis1986limit}. In the case of long-memory and heavy-tailed
%linear processes, the estimation of the autocovariance was studied in
%\cite{mcelroy2012subsampling}. Asymptotic distributions of long-memory
%processes in the cases of finite and infinite fourth moment were
%derived in \cite{horvath2008sample}. Robust estimators of scale and
%autocovariance function of short- and long-memory processes were
%introduced in \cite{levy2011robust}. In all of the foregoing cases,
%the limiting distribution is not normal with the standard rate of
%convergence. However, \myworries{viittaus laurin paperiin}

Modeling an observed time-series with an ARMA process starts by fixing
the orders of the model. This is often done by an educated guess, but
there also exists methods for estimating the orders, see e.g. \citep
{hannan1980estimation}. After the orders are fixed, the related
parameters can be estimated, for example, by using the maximum
likelihood or least squares estimators. These estimators are expressed
in terms of optimization problems and do not generally admit closed
form representations. The final step is to conduct various diagnostic
tests to determine whether the estimated model is sufficiently good or
not. These tests are often designed to recognize whether the residuals
of the model support the underlying assumptions about the error terms.
Depending on whether one considers strict or weak stationarity, the
error process is usually assumed to be an IID process or white noise,
respectively. If the tests do not support the assumptions about the
noise process, then one has to start all over again. Tests for the
goodness of fit of ARMA models
%under weak assumptions of the noise
have been suggested e.g. in \citep{francq2005diagnostic}. %With the
%weak stationarity, the existence of the second moments follows
%straight from the definition, whereas strictly stationary processes do
%not necessary have even finite expectations.

%In this paper, we apply AR$(1)$ characterization in modelling strictly
%stationary processes. Moreover, we derive consistent and
%asymptotically normal estimators for the corresponding model parameter.

The approach taken in this paper is based on the discrete version of
the main theorem of \citep{asymptotics} leading to an AR$(1)$
characterization for (any) strictly stationary processes. Note that
this approach covers, but is not limited to, strictly stationary ARMA
processes. %Based on the characterization, we introduce estimators for
%the AR$(1)$ parameter and derive their asymptotical distributions
%under weak assumptions on the estimators for the autocovariances. \\
It was stated in \citep{asymptotics} that a process is strictly
stationary if and only if for every fixed $0<H<1$ it can be represented
in the AR$(1)$ form with $\phi= e^{-H}$ and a unique, possibly
correlated, noise term. Although the representation is unique only
after $H$ is fixed, we show that in most of the cases, given just one
value of the autocovariance function of the noise, one is able to
determine the AR$(1)$ parameter and, consequently, the entire
autocovariance function of the noise process. It is worth emphasizing
that since the parameter--noise pair in the AR$(1)$ characterization is
not unique, it is natural that some information about the noise has to
be assumed. Note that conventionally, when applying ARMA models,
%we have much stronger assumptions about the noise process as IID or
%white noise is assumed.
we have assumptions about the noise process much stronger than being
IID or white noise.
That is, the autocovariance function of the noise is assumed to be
identically zero except at the origin. When founding estimation on the
AR$(1)$ characterization, one does not have to select between different
complicated models. In addition, there is only one parameter left to be
estimated. Yet another advantage over classical ARMA estimation is that
we obtain closed form expressions for the estimators.
%whereas in general, the maximum likelihood and least squares
%estimators of ARMA processes do not admit closed form representations.%
%We are also going to show that consistency and asymptotic normality of
%our estimators follow if the estimators of the autocovariances possess
%the same properties. \myworries{viittaus Pauliinan ja Laurin paperiin}

The paper is organized as follows. We begin Section~\ref{model} by
introducing some terminology and notation. After that, we give a
characterization of discrete time strictly stationary processes as
AR$(1)$ processes with a possibly correlated noise term together with
some illustrative examples. The AR$(1)$ characterization leads to
Yule--Walker type equations for the AR$(1)$ parameter $\phi$. In this
case, due to the correlated noise process, the equations are of
quadratic form in $\phi$. For the rest of the section, we study the
quadratic equations and determine $\phi$ with as little information
about the noise process as possible.
The approach taken in Section~\ref{model} leads to an estimator of the
AR$(1)$ parameter. We consider estimation in detail in Section~\ref{estimation}.
The end of Section~\ref{estimation} is dedicated to
testing the assumptions
%we are bounded to make
we make when constructing the estimators.
A simulation study to assess finite sample properties of the estimators
is presented in Section~\ref{simulations}.
Finally, we end the paper with three appendices containing a technical
proof, detailed discussion on some special cases and tabulated
simulation results.

\section{On AR$(1)$ characterization in modeling strictly stationary processes}
\label{model}

Throughout the paper we consider strictly stationary processes.
\begin{defi}
Assume that $X = (X_t)_{t\in\mathbb{Z}}$ is a stochastic process. If
\begin{equation*}
(X_{t+n_1}, X_{t+n_2},\ldots,X_{t+n_k} ) \overset{\text
{law}} {=} (X_{n_1}, X_{n_2},\ldots,X_{n_k} )
\end{equation*}
for all $k\in\mathbb{N}$ and $t, n_1, n_2,\ldots,n_k \in\mathbb{Z}$, then
$X$ is strictly stationary.
\end{defi}

\begin{defi}
Assume that $G = (G_t)_{t\in\mathbb{Z}}$ is a stochastic process and
denote $\Delta_{t} G = G_t - G_{t-1}$. If $ (\Delta_{t} G
)_{t\in\mathbb{Z}}$ is strictly stationary, then the process $G$ is a
strictly stationary increment process.
\end{defi}
The following class of stochastic processes was originally introduced
in \citep{asymptotics}.

\begin{defi}
\label{G}
Let $H > 0$ be fixed and let $G= (G_t)_{t\in\mathbb{Z}}$ be a
stochastic process. If $G$ is a strictly stationary increment process
with $G_0 = 0$ and if the limit
\begin{equation}
\label{limit} \lim_{k\to-\infty} \sum_{t=k}^0
e^{tH}\Delta_{t} G
\end{equation}
exists in probability and defines an almost surely finite random
variable, then $G$ belongs to the class of converging strictly
stationary increment processes, and we denote $G\in\mathcal{G}_H$.
\end{defi}
Next, we consider the AR$(1)$ characterization of strictly stationary
processes. The continuous time analogy was proved in \citep
{asymptotics} together with a sketch of a proof for the discrete case.
For the reader's convenience, a detailed proof of the discrete case is
presented in Appendix~\ref{techproof}.

\begin{theorem}
\label{main}
Let $H >0$ be fixed and let $X = (X_t)_{t\in\mathbb{Z}}$ be a
stochastic process. Then $X$ is strictly stationary if and only if $\lim_{t\to-\infty}e^{tH}X_t = 0$ in probability and
\begin{equation}
\label{langevin} \Delta_t X = \bigl(e^{-H} - 1
\bigr)X_{t-1} + \Delta_t G
\end{equation}
for a unique $G\in\mathcal{G}_H$.
\end{theorem}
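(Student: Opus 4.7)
The plan is to prove both directions of the equivalence after first rewriting \eqref{langevin} as $X_t = e^{-H}X_{t-1} + \Delta_t G$, which exposes the Langevin-type equation as an AR(1) recursion with parameter $\phi = e^{-H}$ and correlated noise increment $\Delta_t G$. I would also note at the outset that for any strictly stationary $X$ the condition $e^{tH}X_t \to 0$ in probability as $t \to -\infty$ is automatic: the $X_t$ are identically distributed hence tight, and $e^{tH}\to 0$.

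For the forward direction, I would define $\Delta_t G := X_t - e^{-H}X_{t-1}$, set $G_0 := 0$, and extend $G$ additively to all of $\mathbb{Z}$. Since $(\Delta_t G)_{t}$ is a shift-equivariant functional of the strictly stationary process $X$, it is itself strictly stationary, so $G$ is a strictly stationary increment process with $G_0 = 0$. To verify $G \in \mathcal{G}_H$, the partial sums in \eqref{limit} telescope as
\[
\sum_{t=k}^{0} e^{tH}\Delta_t G \;=\; \sum_{t=k}^{0} e^{tH}X_t \;-\; \sum_{t=k-1}^{-1} e^{tH}X_t \;=\; X_0 - e^{(k-1)H}X_{k-1},
\]
which converges to $X_0$ in probability as $k \to -\infty$ by the automatic limit condition. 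Uniqueness is then immediate: any $G' \in \mathcal{G}_H$ satisfying \eqref{langevin} must obey $\Delta_t G' = \Delta_t X - (e^{-H}-1)X_{t-1} = \Delta_t G$, and $G_0 = G'_0 = 0$ forces $G = G'$.

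For the reverse direction, I would assume $G \in \mathcal{G}_H$ and that both displayed conditions hold, then iterate the AR(1) recursion to obtain
\[
X_t \;=\; e^{-nH}X_{t-n} \;+\; \sum_{k=0}^{n-1} e^{-kH}\Delta_{t-k}G
\]
for every $n \geq 1$. The remainder equals $e^{-tH}\cdot e^{(t-n)H}X_{t-n}$, which tends to $0$ in probability as $n \to \infty$ by assumption. For the partial sum, strict stationarity of $(\Delta_u G)_u$ implies that its joint distributions across $n$ coincide with those of $\sum_{s=-n+1}^{0} e^{sH}\Delta_s G$. The latter converges in probability by $G \in \mathcal{G}_H$, hence is Cauchy in probability, and this Cauchy property transfers back. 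Applying the same argument jointly at indices $t_1,\ldots,t_m$ under a common shift $t_j \mapsto t_j + h$ identifies the finite-dimensional laws of $(X_{t_1},\ldots,X_{t_m})$ with those of $(X_{t_1+h},\ldots,X_{t_m+h})$, yielding strict stationarity.

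The main obstacle is precisely this last step: promoting the scalar convergence in probability to \emph{joint} convergence and shift invariance of the finite-dimensional distributions of $X$. The crux is that the truncated AR(1) partial sums are continuous functionals of the strictly stationary process $(\Delta_u G)_u$, so one must argue carefully that Cauchy-in-probability for the weighted tails, established via the stationary representative $\sum_{s=-n+1}^{0} e^{sH}\Delta_s G \in \mathcal{G}_H$, passes to the jointly shifted sums defining $X_{t_1},\ldots,X_{t_m}$, and that these joint limits inherit invariance under translation of the indices.
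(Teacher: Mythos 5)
Your proposal is correct, and the converse direction (iterating the recursion, killing the remainder via the limit condition, transferring the Cauchy-in-probability property through stationarity of the increments, and then treating finite-dimensional vectors under a common shift) is essentially identical to the paper's. Where you genuinely diverge is the forward direction: the paper routes it through the discrete Lamperti transform, passing to the $H$-self-similar process $Y_{e^t}=e^{tH}X_t$ and invoking a dedicated lemma (Lemma \ref{Y}) to build $G$ from $\Delta_t Y_{e^t}$ and verify $G\in\mathcal{G}_H$, whereas you define $\Delta_t G:=X_t-e^{-H}X_{t-1}$ directly, get strict stationarity of the increments as an immediate consequence of strict stationarity of $X$, and check the convergence in \eqref{limit} by the telescoping identity $\sum_{t=k}^0 e^{tH}\Delta_t G=X_0-e^{(k-1)H}X_{k-1}$ together with the observation that $e^{tH}X_t\to0$ in probability automatically for identically distributed $X_t$. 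The two constructions produce the same $G$ (since $e^{-tH}\Delta_t Y_{e^t}=X_t-e^{-H}X_{t-1}$), so the difference is one of packaging: your argument is shorter and self-contained, avoiding the self-similarity machinery altogether, while the paper's version makes the Lamperti correspondence explicit, which both mirrors the continuous-time proof it is adapted from and explains where the class $\mathcal{G}_H$ comes from. Uniqueness is handled identically in both.
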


\begin{kor}\label{cor:stat}
Let $H>0$ be fixed. Then every discrete time strictly stationary
process $(X_t)_{t\in\mathbb{Z}}$ can be represented as
\begin{equation}
\label{form} X_t - \phi^{(H)} X_{t-1} =
Z_t^{(H)},
\end{equation}
where $\phi^{(H)} = e^{-H}$ and $Z_t^{(H)} = \Delta_t G$ is another
strictly stationary process.
\end{kor}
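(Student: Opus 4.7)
The plan is to obtain the corollary as a direct consequence of Theorem~\ref{main}, essentially by algebraic rearrangement. Since $X$ is strictly stationary, Theorem~\ref{main} applies and guarantees the existence of a unique $G\in\mathcal{G}_H$ such that
\begin{equation*}
\Delta_t X = \bigl(e^{-H}-1\bigr)X_{t-1} + \Delta_t G.
\end{equation*}
Expanding $\Delta_t X = X_t - X_{t-1}$ and moving the $X_{t-1}$ terms to the left-hand side immediately yields
\begin{equation*}
X_t - e^{-H}X_{t-1} = \Delta_t G,
\end{equation*}
which is exactly \eqref{form} once we set $\phi^{(H)} = e^{-H}$ and $Z_t^{(H)} = \Delta_t G$.

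The only remaining thing to check is that $Z_t^{(H)}$ is itself strictly stationary. This follows directly from the definitions: by Definition~\ref{G}, a process $G\in\mathcal{G}_H$ is in particular a strictly stationary increment process, and by the definition of a strictly stationary increment process this means precisely that $(\Delta_t G)_{t\in\mathbb{Z}}$ is strictly stationary. Hence $Z^{(H)} = (\Delta_t G)_{t\in\mathbb{Z}}$ qualifies as the desired strictly stationary noise.

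There is no substantive obstacle here; all of the heavy lifting is carried out by Theorem~\ref{main}, whose proof is deferred to the appendix. The corollary is really just a convenient re-packaging of the ``only if'' direction of that theorem in the customary AR$(1)$ notation $X_t - \phi X_{t-1} = Z_t$, making explicit that the resulting noise is a strictly stationary (but generally correlated) process rather than white noise.
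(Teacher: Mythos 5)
Your proof is correct and matches the paper's treatment: the paper offers no separate argument for this corollary, explicitly calling it an "almost trivial" repackaging of Theorem~\ref{main}, which is exactly the rearrangement you perform. Your observation that strict stationarity of $Z^{(H)}=\Delta_t G$ comes straight from the definition of $\mathcal{G}_H$ as a class of strictly stationary increment processes is the only point needing mention, and you handle it correctly.
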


It is worth to note that the noise $Z$ in Corollary \ref{cor:stat} is
unique only after the parameter $H$ is fixed. The message of this
result is that every strictly stationary process is an AR$(1)$ process
with a strictly stationary noise that may have a non-zero
autocovariance function. The following examples show how some
conventional ARMA processes can be represented as an AR$(1)$ process.
\begin{ex}
\label{exar}
Let $X$ be a strictly stationary AR$(1)$ process defined by
\begin{equation*}
X_t - \varphi X_{t-1} = \epsilon_t, \qquad(
\epsilon_t)\sim IID\bigl(0, \sigma^2\bigr)
\end{equation*}
with $\varphi>0$. Then we may simply choose $\phi^{(H)} = \varphi$ and
$Z_t^{(H)} = \epsilon_t$.
\end{ex}

\begin{ex}
\label{exarma}
Let $X$ be a strictly stationary ARMA$(1,q)$ process defined by
\begin{equation*}
X_t - \varphi X_{t-1} = \epsilon_t +
\theta_1\epsilon_{t-1}+\cdots +\theta_q
\epsilon_{t-q}, \qquad (\epsilon_t )\sim IID\bigl(0,
\sigma^2\bigr)
\end{equation*}
with $\varphi>0$. Then we may set $\phi^{(H)} = \varphi$, and
$Z_t^{(H)}$ then equals to the MA$(q)$ process.
\end{ex}

\begin{ex}
Consider a strictly stationary AR$(1)$ process $X$ with $\varphi<0$.
Then $X$ admits an MA$(\infty)$ representation
\begin{equation*}
X_t = \sum_{k=0}^\infty
\varphi^k \epsilon_{t-k}.
\end{equation*}
From this it follows that
\begin{equation*}
Z_t^{(H)} = \epsilon_t + \sum
_{k=0}^\infty\varphi^k \bigl(\varphi-\phi
^{(H)} \bigr)\epsilon_{t-1-k}
\end{equation*}

\iffalse
Hence
%
\begin{equation*}
%
\begin{split} Z_t^{(H)} &= \sum
_{k=0}^\infty\varphi^k\epsilon_{t-k}
- \phi^{(H)}\sum_{k=0}^\infty
\varphi^k \epsilon_{t-1-k}
\\
&=\epsilon_t + \sum_{k=0}^\infty
\varphi^{k+1}\epsilon_{t-1-k} - \phi ^{(H)}\sum
_{k=0}^\infty\varphi^k \epsilon_{t-1-k}
\\
&=\epsilon_t + \sum_{k=0}^\infty
\varphi^k \bigl(\varphi-\phi^{(H)} \bigr)
\epsilon_{t-1-k}. \end{split} %
\end{equation*}
%
Let var$(\epsilon_t) = \sigma^2$. Then, for $t>0$ it holds that
%
\begin{equation*}
%
\begin{split} \text{cov}\bigl(Z_t^{(H)},
Z_0^{(H)}\bigr) &= \varphi^{t-1} (\varphi-\ph )
\sigma^2
\\
&\ \ + \text{cov} \Biggl(\sum_{k=t}^\infty
\varphi^k (\varphi-\ph )\epsilon_{t-1-k}, \sum
_{k=0}^\infty\varphi^k (\varphi-\ph )
\epsilon_{-1-k} \Biggr). \end{split} %
\end{equation*}
%
The covariance on the on right-hand side can be written as
%
\begin{equation*}
%
\begin{split} &\quad\text{cov} \Biggl(\sum
_{n=-1}^{-\infty} \varphi^{t-n-1} (\varphi-\ph )
\epsilon_n, \sum_{n=-1}^{-\infty}
\varphi^{-n-1} (\varphi-\ph )\epsilon_n \Biggr).
\\
&= \varphi^{t-2} (\varphi-\ph )^2\sigma^2 \sum
_{n=1}^{\infty} \varphi^{2n}.
\end{split} %
\end{equation*}
%
Since $X$ was stationary, the sum above converges and we obtain that
\fi
\noindent and
\begin{equation*}
\text{cov}\bigl(Z_t^{(H)}, Z_0^{(H)}
\bigr) = \varphi^{t-2} (\varphi-\ph )\sigma^2 \Biggl(\varphi+
\bigl(\varphi-\ph \bigr)\sum_{n=1}^\infty \bigl(
\varphi^2\bigr)^n \Biggr).
\end{equation*}
Hence in the case of an AR$(1)$ process with a negative parameter, the
autocovariance function of the noise $Z$ of the representation \eqref
{form} is non-zero everywhere.
\end{ex}

Next we show how to determine the AR$(1)$ parameter $\phi^{(H)}$ in
\eqref{form} provided that the observed process $X$ is known. In what
follows, we omit the superindices in \eqref{form}. We assume that the
second moments of the considered processes are finite and that the
processes are centered. That is, $\e(X_t) = \e(Z_t) = 0$ for every $t\in
\mathbb{Z}$. Throughout the rest of the paper, we use the notation
cov$(X_t, X_{t+n}) = \gamma(n)$ and cov$(Z_t,Z_{t+n}) = r(n)$ for every
$t,n\in\mathbb{Z}$.

\begin{lemma}
\label{lemma:equations}
Let centered $(X_t)_{t\in\mathbb{Z}}$ be of the form \eqref{form}. Then
\begin{equation}
\label{quadratic} \phi^2 \gamma(n) - \phi \bigl(\gamma(n+1) +
\gamma(n-1) \bigr) + \gamma (n) - r(n) = 0
\end{equation}
for every $n\in\mathbb{Z}$.
\end{lemma}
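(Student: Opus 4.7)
The plan is to compute the noise autocovariance $r(n)$ directly in terms of $\gamma$ by substituting the defining relation \eqref{form} into the covariance and expanding.

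First I would write, for any $t,n\in\mathbb{Z}$, the identity $Z_t = X_t - \phi X_{t-1}$ together with $Z_{t+n} = X_{t+n} - \phi X_{t+n-1}$, following directly from the representation \eqref{form}. Using that both $X$ and $Z$ are centered, I have $r(n)=\mathbb{E}[Z_t Z_{t+n}]$ and $\gamma(k)=\mathbb{E}[X_t X_{t+k}]$, where the latter is independent of $t$ by strict stationarity of $X$.

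Next I would multiply out the product $(X_t - \phi X_{t-1})(X_{t+n} - \phi X_{t+n-1})$ inside the expectation and collect the four terms. Three of them give, respectively, $\gamma(n)$, $\phi\gamma(n+1)$, $\phi\gamma(n-1)$, and the fourth contributes $\phi^2\gamma(n)$. Combining, one obtains
\begin{equation*}
r(n)=\bigl(1+\phi^2\bigr)\gamma(n)-\phi\bigl(\gamma(n+1)+\gamma(n-1)\bigr),
\end{equation*}
which is precisely \eqref{quadratic} after moving $r(n)$ to the other side.

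There is really no obstacle: the argument is a one-line expansion using bilinearity of covariance and stationarity of $X$. The only small point to keep in mind is that by stationarity $\mathbb{E}[X_{t-1}X_{t+n-1}]=\gamma(n)$ and $\mathbb{E}[X_{t-1}X_{t+n}]=\gamma(n+1)$, so the identity holds for every $n\in\mathbb{Z}$, including negative values, where $\gamma(-k)=\gamma(k)$ makes the formula internally consistent.
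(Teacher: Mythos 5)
Your proposal is correct and is essentially the paper's own argument: the paper likewise multiplies $Z_n = X_n - \phi X_{n-1}$ by $Z_0 = X_0 - \phi X_{-1}$, takes expectations, and expands by bilinearity and stationarity, which is exactly your computation with $t=0$. Nothing further is needed.
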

\begin{proof}
%Let centred $(X_t)_{t\in\mathbb{Z}}$ be of the form \eqref{form}.
Let $n\in\mathbb{Z}$. By multiplying both sides of
\begin{equation*}
X_n - \phi X_{n-1} = Z_n
\end{equation*}
with $Z_0 = X_0 - \phi X_{-1}$ and taking expectations, we obtain
\begin{align*}
 &\e \bigl(X_n(X_0- \phi
X_{-1}) \bigr)- \phi\e \bigl(X_{n-1}(X_0 - \phi
X_{-1}) \bigr)
\\
&\quad=\ \phi^2\gamma(n) - \phi\bigl(\gamma(n+1) + \gamma(n-1)\bigr)+
\gamma(n) = r(n).\qedhere
\end{align*}
\iffalse
%
\begin{equation}
\label{sonta} %
\begin{split} \gamma(k) - \phi\gamma(k-1) &=
\e(Z_tX_{t-k})
\\
&= \e\bigl(Z_t(Z_{t-k}+\phi X_{t-k-1})\bigr)
\\
&= r(k) + \phi\e\bigl(Z_t(Z_{t-k-1} + \phi X_{t-k-2})
\bigr)
\\
&=\sum_{i=0}^\infty\phi^i r(i+k).
\end{split} %
\end{equation}
%
By using $k=n+2$ in the equation above and multiplying the both sides
with $\phi$, we obtain
%
\begin{equation*}
\phi\gamma(n+2) - \phi^2\gamma(n+1) = \sum
_{i=1}^\infty\phi^i r(i+n+1).
\end{equation*}
%
Let us next subtract the equation above from \eqref{sonta} with $k =
n+1$ yielding
%
\begin{equation*}
\phi^2 \gamma(n+1) -\phi\bigl(\gamma(n+2) + \gamma(n)\bigr) +
\gamma(n+1) = r(n+1).
\end{equation*}
%
\fi
\end{proof}

\begin{kor}
\label{estimator2}
Let centered $(X_t)_{t\in\mathbb{Z}}$ be of the form \eqref{form} and
let $N\in\mathbb{N}$ be fixed.
\begin{itemize}
\item[(1)] If $\gamma(N) \neq0$, then either
\begin{equation}
\label{solution} \hspace*{-0.53cm} \phi\,{=}\, \frac{\gamma(N\,{+}\,1) \,{+}\, \gamma(N\,{-}\,1) \,{+}\, \sqrt{ (\gamma(N\,{+}\,1)\,{+}\,\gamma
(N\,{-}\,1) )^2 \,{-}\, 4\gamma(N)(\gamma(N) \,{-}\, r(N))}}{2\gamma(N)}
\end{equation}
or
\begin{equation}
\label{solution2} \hspace*{-0.63cm} \phi\,{=}\, \frac{\gamma(N\,{+}\,1) \,{+}\, \gamma(N\,{-}\,1) \,{-}\, \sqrt{ (\gamma(N\,{+}\,1)\,{+}\,\gamma
(N\,{-}\,1) )^2 \,{-}\, 4\gamma(N)(\gamma(N) \,{-}\, r(N))}}{2\gamma(N)}.
\end{equation}
\item[(2)] If $\gamma(N) = 0$ and $r(N) \neq0$, then
\begin{equation*}
\phi= -\frac{r(N)}{\gamma(N+1) + \gamma(N-1)}.
\end{equation*}
\end{itemize}
\end{kor}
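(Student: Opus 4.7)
The plan is to apply Lemma \ref{lemma:equations} with $n = N$ and simply solve the resulting relation for $\phi$, splitting into two cases according to whether the coefficient of $\phi^2$ vanishes. Since Theorem \ref{main} guarantees existence of a real AR$(1)$ parameter $\phi$ once $H$ is fixed, in both cases the equation I solve is known a priori to admit a real root.

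For part (1), I assume $\gamma(N)\neq0$. Then \eqref{quadratic} evaluated at $n=N$ is a genuine quadratic in $\phi$ with leading coefficient $\gamma(N)$, constant term $\gamma(N)-r(N)$, and middle coefficient $-(\gamma(N+1)+\gamma(N-1))$. I simply invoke the standard quadratic formula, which yields the two candidate expressions \eqref{solution} and \eqref{solution2}. Reality of the discriminant is automatic since a real solution $\phi = e^{-H}$ exists by Theorem \ref{main}.

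For part (2), I assume $\gamma(N)=0$. Then the quadratic term in \eqref{quadratic} drops out and the equation degenerates to the linear relation
\begin{equation*}
-\phi\bigl(\gamma(N+1)+\gamma(N-1)\bigr) - r(N) = 0.
\end{equation*}
Under the additional hypothesis $r(N)\neq0$, I note that $\gamma(N+1)+\gamma(N-1)$ must itself be nonzero; otherwise the displayed equation would read $-r(N)=0$, contradicting the assumption. Dividing then gives the claimed formula $\phi = -r(N)/(\gamma(N+1)+\gamma(N-1))$.

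I do not anticipate any serious obstacle: the whole argument is a direct application of the quadratic formula combined with a short case analysis. The only mildly delicate point is the automatic non-vanishing of $\gamma(N+1)+\gamma(N-1)$ in case (2), which follows trivially from the degenerated equation together with the hypothesis $r(N)\neq 0$.
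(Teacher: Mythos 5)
Your proposal is correct and matches the paper's (implicit) argument: the corollary is a direct consequence of Lemma \ref{lemma:equations} via the quadratic formula when $\gamma(N)\neq0$ and via the degenerate linear equation when $\gamma(N)=0$, with the non-vanishing of $\gamma(N+1)+\gamma(N-1)$ in case (2) following exactly as you note. The paper treats the reality of the discriminant separately (Lemma \ref{D}), but your observation that it is automatic from the existence of the real root $\phi=e^{-H}$ is also valid.
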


\noindent Note that if $\gamma(N) = r(N) = 0$, then Lemma \ref
{lemma:equations} yields only $\gamma(N+1) + \gamma(N-1) = 0$ providing
no information about the parameter $\phi$. As such, in order to
determine the parameter $\phi$, we require that either $\gamma(N) \neq
0$ or $r(N) \neq0$.

\begin{rem}
If the variance $r(0)$ of the noise is known, then \eqref{solution} and
\eqref{solution2} reduces to
\begin{equation*}
\phi= \frac{\gamma(1) \pm\sqrt{\gamma(1)^2 - \gamma(0) (\gamma(0)
- r(0) )}}{\gamma(0)}.
\end{equation*}
\end{rem}

\noindent At first glimpse it seems that Corollary \ref{estimator2} is
not directly applicable. Indeed, in principle it seems like there could
be complex-valued solutions although representation \eqref{form}
together with \eqref{quadratic} implies that there exists a solution
$\phi\in(0,1)$. Furthermore, it is not clear whether the true value is
given by \eqref{solution} or \eqref{solution2}. We next address these
issues. We start by proving that the solutions to \eqref{quadratic}
cannot be complex. At the same time we are able to determine which one
of the solutions one should choose.

\begin{lemma}
\label{D}
The discriminants of \eqref{solution} and \eqref{solution2} are always
non-negative.
\end{lemma}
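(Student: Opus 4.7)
The key observation is that the expressions under the square roots in \eqref{solution} and \eqref{solution2} are identical, both equal to
\begin{equation*}
D(N) := \bigl(\gamma(N+1)+\gamma(N-1)\bigr)^2 - 4\gamma(N)\bigl(\gamma(N)-r(N)\bigr),
\end{equation*}
so it suffices to prove $D(N)\geq 0$ in the regime where the formulas \eqref{solution} and \eqref{solution2} actually make sense, i.e.\ when $\gamma(N)\neq 0$.

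The plan is to derive non-negativity from the existence part of the theory rather than by a direct algebraic manipulation. By Theorem~\ref{main} (equivalently Corollary~\ref{cor:stat}), for any fixed $H>0$ the strictly stationary process $X$ admits the AR$(1)$ representation \eqref{form} with the real parameter $\phi^{(H)} = e^{-H}\in(0,1)$. Lemma~\ref{lemma:equations} applied at lag $n=N$ then says that this concrete real number is a root of the quadratic \eqref{quadratic}.

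When $\gamma(N)\neq 0$, equation \eqref{quadratic} at $n=N$ is a genuine quadratic polynomial in $\phi$ with real coefficients, and the previous step has exhibited a real root. Since the discriminant of a real quadratic possessing a real root is necessarily non-negative, and since this discriminant is exactly $D(N)$, the claim follows.

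There is essentially no obstacle: the lemma is the algebraic shadow of the fact that Theorem~\ref{main} guarantees the existence of a real AR$(1)$ parameter. The only caveat is the degenerate case $\gamma(N)=0$, which is precisely the case excluded from \eqref{solution} and \eqref{solution2} and handled separately in part~(2) of Corollary~\ref{estimator2}.
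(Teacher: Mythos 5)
Your argument is correct: Lemma \ref{lemma:equations} exhibits $\phi=e^{-H}\in(0,1)$ as a real root of the real quadratic \eqref{quadratic} at $n=N$, and when $\gamma(N)\neq 0$ a genuine real quadratic with a real root must have non-negative discriminant, which is exactly $D(N)$. (In the degenerate case $\gamma(N)=0$ the quantity $D(N)=(\gamma(N+1)+\gamma(N-1))^2$ is trivially non-negative anyway, so the "always" in the statement is covered too.) This is, however, a genuinely different route from the paper's. The paper proves the lemma by brute force: it iterates the representation \eqref{form} to obtain the identity \eqref{sonta}, deduces $\gamma(N)=\phi\gamma(N-1)+\phi(\gamma(N+1)-\phi\gamma(N))+r(N)$, substitutes this into $D$, and after multiplying by $\phi^2/\gamma(N)^2$ arrives at the closed form $\frac{\phi^2}{\gamma(N)^2}D=(\phi^2-1+a_N)^2$ with $a_N=r(N)/\gamma(N)$. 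Your approach is shorter and more conceptual for the non-negativity claim in isolation, but the paper's computation buys strictly more: the explicit value of the discriminant is what feeds the subsequent identity
\begin{equation*}
\phi= \frac{1}{2\phi}\biggl(1+\phi^2-a_N\pm\frac{|\gamma(N)|}{\gamma(N)}\bigl|\phi^2-1+a_N\bigr|\biggr),
\end{equation*}
on which the entire discussion of how to choose between \eqref{solution} and \eqref{solution2} (via the sign of $\phi^2-1+a_N$ and the conditions $a_N\leq 0$ or $a_N\geq 1$) rests. So your proof establishes the lemma as stated, but it would not support the analysis that immediately follows it in the paper.
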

\begin{proof}
Let $k\in\mathbb{Z}$. By multiplying both sides of \eqref{form} with
$X_{t-k}$, taking expectations, and applying \eqref{form} repeatedly we obtain
\begin{align*}
\gamma(k) - \phi\gamma(k-1) &= \e(Z_tX_{t-k}) = \e
\bigl(Z_t(Z_{t-k}+\phi X_{t-k-1})\bigr)
\nonumber
\\
&= r(k) + \phi\e(Z_tX_{t-k-1} )
\nonumber
\\
&= r(k) + \phi\e\bigl(Z_t(Z_{t-k-1} + \phi X_{t-k-2})
\bigr)
\nonumber
\\
&=r(k) + \phi r(k+1) + \phi^2\e (Z_t X_{t-k-2}
).
\end{align*}
Proceeding as above $l$ times we get
\begin{equation*}
\gamma(k) - \phi\gamma(k-1) = \sum_{i=0}^{l-1}
\phi^i r(k+i) + \phi ^{l}\e\bigl(Z_t(\phi
X_{t-k-l-2})\bigr).
\end{equation*}
Letting $l$ approach infinity leads to
\begin{equation}
\gamma(k) - \phi\gamma(k-1) = \sum_{i=0}^\infty
\phi^i r(k+i)\label{sonta},
\end{equation}
where the series converges as $r(k+i)\leq r(0)$ and $0<\phi<1$. It now
follows from \eqref{sonta} that
\begin{equation*}
\begin{split} \gamma(N) &= \phi\gamma(N-1) + \sum
_{i=0}^\infty\phi^{i} r(N+i)
\\
&=\phi\gamma(N-1) +r(N)+\phi\sum_{i=1}^\infty
\phi^{i-1}r(N+i)
\\
&=\phi\gamma(N-1) +r(N)+\phi\sum_{i=0}^\infty
\phi^{i}r(N+i+1)
\\
&= \phi\gamma(N-1) + \phi \bigl(\gamma(N+1) - \phi\gamma(N) \bigr)+r(N).
\end{split} %
\end{equation*}
Denote the discrimant of \eqref{solution} and \eqref{solution2} by $D$.
That is,
\[
D = \bigl(\gamma(N-1)+\gamma(N+1) \bigr)^2 - 4\gamma(N) \bigl(\gamma
(N)-r(N) \bigr).
\]
By using the equation above we observe that
\begin{equation*}
\begin{split} D &= \biggl(\frac{\gamma(N) + \phi^2\gamma(N)-r(N)}{\phi} \biggr)^2 -
4\gamma(N) \bigl(\gamma(N)-r(N)\bigr). \end{split} %
\end{equation*}
Denoting $a_N = \frac{r(N)}{\gamma(N)}$, multiplying by $\frac{\phi
^2}{\gamma(N)^2}$, and using the identity
\[
(a+b)^2 -4ab = (a-b)^2
\]
yields
\begin{equation*}
\begin{split} \frac{\phi^2}{\gamma(N)^2}D = \bigl(1+\phi^2-a_N
\bigr)^2 - 4\phi^2(1-a_N) = \bigl(\phi
^2-1+a_N\bigr)^2 \geq0. \end{split}
\end{equation*}
This concludes the proof.
\end{proof}

Note that if $r(N) = 0$, as $\phi<1$, the discriminant is always
positive. Let $a_N = \frac{r(N)}{\gamma(N)}$. The proof above now gives
us the following identity
\begin{equation*}
\phi= \frac{1}{2\phi} \biggl(1+\phi^2-a_N\pm
\frac{|\gamma(N)|}{\gamma
(N)} \bigl\llvert \phi^2-1+a_N \bigr\rrvert
\biggr).
\end{equation*}
This enables us to consider the choice between \eqref{solution} and
\eqref{solution2}. Assume that $\gamma(N) > 0$. If $\phi^2 - 1 + a_N >
0$, then $\phi$ is given by \eqref{solution} (as $\phi\in(0,1)$).
Similarly, if $\phi^2 - 1 +a_N<0$, then $\phi$ is determined by \eqref
{solution2}. Finally, contrary conclusions hold in the case $\gamma(N)
< 0$. In particular, we can always choose between \eqref{solution} and
\eqref{solution2} provided that either $a_N \leq0$ or $a_N \geq1$.
%%More generally, if $a_N \in(0,1)$, then in most cases it suffices to
%know two values of the ratios $a_N$ and $a_{N+k}$.
Moreover, from \eqref{quadratic} it follows that
\begin{equation*}
\frac{r(N)}{\gamma(N)} = \frac{r(N+k)}{\gamma(N+k)}
\end{equation*}
if and only if
\begin{equation*}
\frac{\gamma(N+1)+\gamma(N-1)}{\gamma(N)} = \frac{\gamma(N+1+k)+\gamma
(N-1+k)}{\gamma(N+k)},
\end{equation*}
provided that the denominators differ from zero. Since \eqref{solution}
and \eqref{solution2} can be written as
\begin{align}
\phi&= \frac{\gamma(N+1)+\gamma(N-1)}{2\gamma(N)} \notag\\
&\quad\pm\frac{1}{2}
\text {sgn}\bigl(\gamma(N)\bigr)\sqrt{ \biggl(\frac{\gamma(N+1)+\gamma(N-1)}{\gamma
(N)}
\biggr)^2-4 \biggl(1-\frac{r(N)}{\gamma(N)} \biggr)},\label{bform}
\end{align}
we observe that one can always rule out one of the solutions \eqref
{solution} and \eqref{solution2} provided that $a_N \neq a_{N+k}$.
Therefore, it always suffices to know two values of the autocovariance
$r$ such that $a_N \neq a_{N+k}$, except the worst case scenario where
$a_j = a \in(0,1)$ for every $j\in\mathbb{Z}$. A detailed analysis of
this particular case is given in Appendix~\ref{worstcase}.

\begin{rem}
Consider a fixed strictly stationary process $X$. If we fix one value
of the autocovariance function of the noise such that Corollary \ref
{estimator2} yields an unambiguous AR$(1)$ parameter, then the
quadratic equations \eqref{quadratic} will unravel the entire
autocovariance function of the noise process. In comparison,
conventionally, the noise is assumed to be white --- meaning that the
entire autocovariance function of the noise is assumed to be known \emph
{a priori}.
\end{rem}

We end this section by observing that in the case of vanishing
autocovariance function of the noise, we get the following simplified
form for the AR$(1)$ parameter.

\begin{theorem}
\label{estimator1}
Let centered $(X_t)_{t\in\mathbb{Z}}$ be of the form \eqref{form} and
let $N\in\mathbb{N}$ be fixed. Assume that $r(m)=0$ for every $m\geq
N$. If $\gamma(N-1)\neq0$, then for every $n\geq N$, we have
\begin{equation*}
\phi= \frac{\gamma(n)}{\gamma(n-1)}.
\end{equation*}
In particular, $\gamma$ admits an exponential decay for $n\geq N$.
\end{theorem}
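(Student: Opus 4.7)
The plan is to exploit the series identity \eqref{sonta} derived in the proof of Lemma~\ref{D}, namely
\[
\gamma(k) - \phi\,\gamma(k-1) \;=\; \sum_{i=0}^{\infty} \phi^{i}\, r(k+i),
\]
which holds for every $k\in\mathbb{Z}$ whenever $X$ admits the AR$(1)$ representation \eqref{form} with $\phi\in(0,1)$. The structure of the hypothesis $r(m)=0$ for $m\geq N$ makes the right-hand side collapse on the tail, so the equation reduces to a one-step recursion for $\gamma$ past the index $N$.

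More precisely, I would first fix an integer $k\geq N$ and observe that every shifted index $k+i$ with $i\geq 0$ satisfies $k+i\geq N$, so $r(k+i)=0$ for all $i\geq 0$. Substituting into the identity above yields
\[
\gamma(k) \;=\; \phi\,\gamma(k-1) \qquad\text{for all } k\geq N.
\]
This is the core of the theorem; the only remaining issue is ensuring that $\gamma(k-1)\neq 0$, so that the division producing $\phi=\gamma(k)/\gamma(k-1)$ is legitimate.

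To handle this, I would induct on $n\geq N-1$ using $\phi\in(0,1)$. The base case is the hypothesis $\gamma(N-1)\neq 0$. For the inductive step, assuming $\gamma(n-1)\neq 0$, the recursion gives $\gamma(n)=\phi\,\gamma(n-1)\neq 0$, since $\phi\neq 0$. Iterating the recursion from the base $N-1$ yields the closed form $\gamma(n)=\phi^{\,n-N+1}\gamma(N-1)$ for $n\geq N-1$, which simultaneously proves that $\gamma$ never vanishes on $\{N-1,N,\dots\}$, furnishes the identity $\phi=\gamma(n)/\gamma(n-1)$ for every $n\geq N$, and exhibits the claimed exponential decay, since $|\gamma(n)| = \phi^{n-N+1}|\gamma(N-1)|$ with $\phi\in(0,1)$.

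There is no real obstacle here: the main subtlety is simply to invoke \eqref{sonta} (rather than just \eqref{quadratic}) so that the full tail of $r$ is used in a single stroke, and then to pass from the single recursion to nonvanishing of $\gamma$ by induction. The hypothesis $\gamma(N-1)\neq 0$ and the fact that $\phi\in(0,1)\setminus\{0\}$ are exactly what is needed to carry the induction through.
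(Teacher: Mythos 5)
Your proposal is correct and follows essentially the same route as the paper: the paper's proof likewise applies the identity \eqref{sonta} with the hypothesis $r(m)=0$ for $m\geq N$ to obtain $\gamma(n)=\phi\gamma(n-1)$ for $n\geq N$, and then invokes $\gamma(N-1)\neq0$. Your explicit induction establishing the nonvanishing of $\gamma(n)$ for $n\geq N-1$ merely spells out a step the paper leaves implicit.
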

\begin{proof}
%Let centred $(X_t)_{t\in\mathbb{Z}}$ be of the form \eqref{form}, let
%$N\in\mathbb{N}$ be such that $r(m)=0$ for every $m\geq N$ and l
Let $\gamma(N-1)\neq0$. It follows directly from \eqref{sonta} and the
assumptions that
\begin{equation*}
\gamma(n) = \phi\gamma(n-1)\quad\text{for every}\ n\geq N.
\end{equation*}
The condition $\gamma(N-1)\neq0$ now implies the claim.
\end{proof}
Recall that the representation \eqref{langevin} is unique only after
$H$ is fixed. As a simple corollary for Theorem \ref{estimator1} we
obtain the following result giving some new information about the
uniqueness of the representation \eqref{langevin}.
\begin{kor}
Let $X$ be a strictly stationary process with a non-vanishing
autocovariance. Then there exists at most one pair $(H, G)$ satisfying
\eqref{langevin} such that the non-zero part of the autocovariance
function of the increment process $(\Delta_t G)_{t\in\mathbb{Z}}$ is finite.
\end{kor}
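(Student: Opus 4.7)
The plan is to reduce uniqueness of the pair $(H, G)$ to uniqueness of the scalar $H$, because Theorem \ref{main} already guarantees that $G$ is determined once $H$ is fixed. The key ingredient is Theorem \ref{estimator1}, which expresses $\phi^{(H)} = e^{-H}$ as a ratio of two autocovariances of $X$ itself, depending only on $X$ and not on the chosen representation, whenever the noise autocovariance eventually vanishes.

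Concretely, I would start by taking two candidate pairs $(H_1, G_1)$ and $(H_2, G_2)$ satisfying \eqref{langevin}, with increment autocovariance functions $r_1, r_2$ each having only finitely many non-zero values. Then I pick $N \in \mathbb{N}$ large enough that $r_1(m) = r_2(m) = 0$ for every $m \geq N$. The non-vanishing hypothesis on $\gamma$ provides some $n \geq N$ with $\gamma(n-1) \neq 0$. Applying Theorem \ref{estimator1} separately to each pair with this common $N$ yields
\[
e^{-H_1} \;=\; \frac{\gamma(n)}{\gamma(n-1)} \;=\; e^{-H_2},
\]
so $H_1 = H_2$. Once $H$ is pinned down, the uniqueness clause of Theorem \ref{main} forces $G_1 = G_2$, completing the argument.

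The only (minor) subtlety is verifying the hypothesis $\gamma(N-1) \neq 0$ of Theorem \ref{estimator1}, which is exactly where the non-vanishing autocovariance assumption on $X$ is used: it allows $N$ to be enlarged, if necessary, so that some $\gamma(n-1)$ with $n \geq N$ is non-zero. This assumption is genuinely needed, since if $\gamma$ were eventually zero the ratio $\gamma(n)/\gamma(n-1)$ would degenerate and two different values of $H$ could potentially produce compatible finitely supported noise autocovariances.
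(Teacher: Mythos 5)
Your argument is correct and is essentially the paper's own proof: both reduce the claim to Theorem \ref{estimator1} applied to a common cut-off lag $N$ to force $e^{-H_1}=\gamma(n)/\gamma(n-1)=e^{-H_2}$, and then invoke the uniqueness of $G$ for fixed $H$ from Theorem \ref{main}. You simply spell out the details (choosing a common $N$ and enlarging it so that $\gamma(N-1)\neq 0$) that the paper leaves implicit.
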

\begin{proof}
%Let $X$ be a fixed strictly stationary process with a non-vanishing
%autocovariance.
Assume that there exists $H_1, H_2 > 0$, and $G_1\in\mathcal{G}_{H_1}$
and $G_2\in\mathcal{G}_{H_2}$ such that the pairs $(H_1, G_1)$ and
$(H_2, G_2)$ satisfy \eqref{langevin} and the autocovariances of
$(\Delta_t G_1)_{t\in\mathbb{Z}}$ and $(\Delta_t G_2)_{t\in\mathbb{Z}}$
have cut-off points. From Theorem \ref{estimator1} it follows that $H_1
= H_2$ and since for a fixed $H$ the process $G$ in \eqref{langevin} is
unique, we get $G_1 = G_2$.
\end{proof}

\section{Estimation}\label{estimation}
Corollary \ref{estimator2} gives natural estimators for $\phi$ provided
that we have been able to choose between \eqref{solution} and \eqref
{solution2}, and that a value of $r(n)$ is known. We emphasize that in
our model it is sufficient to know only one (or in some cases two) of
the values $r(n)$, whereas in conventional ARMA modeling much stronger
assumptions are required. (In fact, in conventional ARMA modeling the
noise process is assumed to be white noise.) It is also worth to
mention that, generally, estimators of the parameters of stationary
processes are not expressible in a closed form. For example, this is
the case with the maximum likelihood and least squares estimators of
conventionally modeled ARMA processes, see \citep{brockwell}. Within
our method, the model fitting is simpler. %there is no need to fit more
%complicated models to data that is assumed to be a realization of a
%stationary process.
Finally, it is worth to note that
%assuming one $r(n)$ to be known is a natural assumption
assumption of one known value of $r(n)$ is a natural one
and cannot be avoided. Indeed, this is a direct consequence of the fact
that the pair $(\phi,Z)$ in representation \eqref{form} is not unique.
In fact, for practitioner, it is not absolutely necessary to know any
values of $r(n)$. The practitioner may make an educated guess and
proceed in estimation. If the obtained estimate then turns out to be
feasible, the practitioner can stop there. If the obtained estimate
turns out to be unreasonable (not on the interval $(0,1)$), then the
practitioner have to make another educated guess. The process is
similar to selecting $p$ and $q$ in traditional ARMA$(p,q)$ modeling.

Throughout this section, we assume that $(X_1, \ldots, X_T)$ is an
observed series from a centered strictly stationary process that is
modeled using the representation \eqref{form}. We use $\ha(n)$ to
denote an estimator of the corresponding autocovariance $\gamma(n)$.
For example, $\ha(n)$ can be given by
\begin{equation*}
\ha(n) = \frac{1}{T} \sum_{t=1}^{T-n}
X_tX_{t+n},
\end{equation*}
or more generally
\begin{equation*}
\ha(n) = \frac{1}{T} \sum_{t=1}^{T-n}
(X_t-\bar{X} ) (X_{t+n}-\bar{X} ),
\end{equation*}
where $\bar{X}$ is the sample mean of the observations.
For this estimator the corresponding sample covariance (function)
matrix is positive semidefinite. On the other hand, the estimator is
biased while it is asymptotically unbiased. Another option is to use
$T-n-1$ as a denominator. In this case one has an unbiased estimator,
but the sample covariance (function) matrix is no longer positive
definite. Obviously, both estimators have the same asymptotic
properties. Furthermore, for our purposes it is irrelevant how the
estimators $\ha(n)$ are defined, as long as they are consistent, and
the asymptotic distribution is known.

We next consider estimators of the parameter $\phi$ arising from
characterization \eqref{form}. In this context, we pose some
assumptions related to the autocovariance function of the observed
process $X$. The justification and testing of these assumptions are
discussed in Section~\ref{subsec:testing}. From \emph{a priori}
knowledge that $\phi\in(0,1)$ we enforce also the estimators to the
corresponding closed interval. However, if one prefers to use unbounded
versions of the estimators, one may very well do that. The asymptotic
properties are the same in both cases. We begin by defining an
estimator corresponding to the second part (2) of Corollary \ref{estimator2}.

\begin{defi}
\label{estimaattori1}
Assume that $\gamma(N) = 0$. Then we define
\begin{equation}
\label{eq:estimator_simplecase} \hat{\phi}_T = -\frac{r(N)}{\hat{\gamma}_T(N+1) + \hat{\gamma
}_T(N-1)}
\mathbh{1}_{\ha(N+1)+\ha(N-1) \neq0}
\end{equation}
whenever the right-hand side lies on the interval $[0,1]$. If the
right-hand side is below zero, we set $\hat{\phi}_T = 0$ and if the
right-hand side is above one, we set $\hat{\phi}_T = 1$.
\end{defi}

\begin{theorem}
\label{cons1}
Assume that $\gamma(N) =0$ and $r(N)\neq0$. If the
vector-valued estimator\\ $ [\ha(N+1), \ha(N-1) ]^\top$ is
consistent, then $\hat{\phi}_T$ is consistent.
\end{theorem}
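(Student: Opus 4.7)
The plan is to derive the claim from the continuous mapping theorem together with Slutsky's theorem, exploiting the closed-form expression supplied by part (2) of Corollary \ref{estimator2}. First, I would record the exact identity that holds under the hypotheses $\gamma(N)=0$ and $r(N)\neq 0$, namely
\begin{equation*}
\phi = -\frac{r(N)}{\gamma(N+1)+\gamma(N-1)}.
\end{equation*}
The key structural observation is that, because $\phi\in(0,1)$ and $r(N)\neq 0$, the denominator $\gamma(N+1)+\gamma(N-1)$ is necessarily non-zero. This is what makes the map $y\mapsto -r(N)/y$ continuous at the relevant limit point and is the only non-mechanical ingredient of the proof.

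Next, I would exploit the assumption that $[\ha(N+1),\ha(N-1)]^\top$ is consistent. Applying the continuous mapping theorem to the addition map, $\ha(N+1)+\ha(N-1)\to\gamma(N+1)+\gamma(N-1)$ in probability. Since the limit is non-zero, the event $\{\ha(N+1)+\ha(N-1)=0\}$ has probability tending to zero, so $\mathbh{1}_{\ha(N+1)+\ha(N-1)\neq 0}$ converges in probability to $1$. A further application of the continuous mapping theorem to $y\mapsto -r(N)/y$, combined with Slutsky's lemma for the indicator factor, yields
\begin{equation*}
-\frac{r(N)}{\ha(N+1)+\ha(N-1)}\,\mathbh{1}_{\ha(N+1)+\ha(N-1)\neq 0} \xrightarrow{\;\text{P}\;} \phi.
\end{equation*}

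Finally, the truncation to $[0,1]$ is harmless. The clipping map $x\mapsto\min(\max(x,0),1)$ is continuous at $\phi$ because $\phi\in(0,1)$, so one last invocation of the continuous mapping theorem gives $\hat{\phi}_T\to\phi$ in probability, which is exactly the desired consistency. The only subtlety worth highlighting, rather than a genuine obstacle, is the initial verification that $\gamma(N+1)+\gamma(N-1)\neq 0$; once this is extracted from Corollary \ref{estimator2}, everything else is routine.
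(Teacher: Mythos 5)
Your proposal is correct and follows essentially the same route as the paper: the crux in both is that the Yule--Walker-type identity \eqref{quadratic} with $\gamma(N)=0$ forces $\gamma(N+1)+\gamma(N-1)\neq 0$ (else $r(N)$ would vanish), after which the continuous mapping theorem applies. The paper's proof is simply a terser version of yours, leaving the indicator and the clipping to $[0,1]$ implicit.
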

\begin{proof}
Since $\gamma(N) = 0$ and $r(N) \neq0$, Equation \eqref{quadratic}
guarantees that $\gamma(N+1) + \gamma(N-1) \neq0$. Therefore
consistency of $\hat{\phi}_T$ follows directly from the continuous
mapping theorem.
\end{proof}

\begin{theorem}
\label{asym3}
Let $\hat{\phi}_T$ be given by (\ref{estimaattori1}), and assume that
$\gamma(N) = 0$ and $r(N) \neq0$. Set $\pmb{\gamma} =  [\gamma
(N+1), \gamma(N-1) ]^\top$ and $\hat{\pmb{\gamma}}_T =  [\ha
(N+1),\ha(N-1) ]^{\top}$. If
\begin{equation*}
l(T) (\hat{\pmb{\gamma}}_T - \pmb{\gamma} )\overset{\text {law}} {
\longrightarrow} \mathcal{N} (\pmb{0}, \varSigma )
\end{equation*}
for some covariance matrix $\varSigma$ and some rate function $l(T)$, then
\begin{equation*}
l(T) (\hat{\phi}_T - \phi ) \overset{\text {law}} {\longrightarrow}
\mathcal{N} \bigl(\pmb{0}, \nabla f(\pmb{\gamma })^\top\varSigma\nabla
f(\pmb{\gamma}) \bigr),
\end{equation*}
%
%\begin{equation*}
%\hat{\phi}_T = -\frac{r(N)}{\hat{\gamma}_T(N+1) + \hat{\gamma}_T(N-1)}
%\mathbbm{1}_{\ha(N+1)+\ha(N-1) \neq0}
%\end{equation*}
where $\nabla f(\pmb{\gamma})$ is given by
\begin{equation}
\label{deltaf} \nabla f(\pmb{\gamma}) = -\frac{r(N)}{ (\gamma(N+1) + \gamma
(N-1) )^2}\cdot %
\begin{bmatrix}
1\\
1
\end{bmatrix} %
.
\end{equation}
\end{theorem}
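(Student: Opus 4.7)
The plan is a routine multivariate delta-method argument applied to $\hat{\phi}_T$ viewed as a smooth function of the two covariance estimates $\ha(N\pm 1)$, once the indicator and the $[0,1]$-truncation in Definition \ref{estimaattori1} have been shown to be asymptotically inactive.

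First I would verify that, with probability tending to one, $\hat{\phi}_T$ coincides exactly with $f(\hat{\pmb{\gamma}}_T)$, where $f(x,y) = -r(N)/(x+y)$. Under the hypotheses $\gamma(N)=0$ and $r(N)\neq 0$, Lemma \ref{lemma:equations} at lag $N$ forces $\gamma(N+1)+\gamma(N-1) = -r(N)/\phi \neq 0$; consistency of $\hat{\pmb{\gamma}}_T$ (which is built into the assumed asymptotic normality) then keeps $\ha(N+1)+\ha(N-1)$ bounded away from zero in probability, so the indicator equals one on an event of probability tending to one. Since $\phi = e^{-H}\in(0,1)$ is strictly interior, Theorem \ref{cons1} likewise shows that the unclipped ratio $-r(N)/(\ha(N+1)+\ha(N-1))$ lies in $(0,1)$ with probability tending to one, so the truncation is also inactive.

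Second, $f$ is continuously differentiable at $\pmb{\gamma}$ because the denominator does not vanish there, and a direct computation gives
\begin{equation*}
\nabla f(\pmb{\gamma}) = \frac{r(N)}{(\gamma(N+1)+\gamma(N-1))^2}\begin{bmatrix} 1 \\ 1 \end{bmatrix},
\end{equation*}
which matches \eqref{deltaf} up to an overall sign that is immaterial inside the quadratic form $\nabla f(\pmb{\gamma})^\top\varSigma\nabla f(\pmb{\gamma})$. Since $f(\pmb{\gamma})=\phi$ by Corollary \ref{estimator2}(2), the multivariate delta method applied to the hypothesized limit $l(T)(\hat{\pmb{\gamma}}_T - \pmb{\gamma}) \overset{\text{law}}{\to} \mathcal{N}(\pmb{0}, \varSigma)$ yields the claimed asymptotic normality for $l(T)(f(\hat{\pmb{\gamma}}_T) - \phi)$; combining this with the event identity from the previous paragraph and Slutsky's theorem transfers the conclusion to $l(T)(\hat{\phi}_T - \phi)$.

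The only subtlety beyond standard bookkeeping is the piecewise definition of $\hat{\phi}_T$: one must write the difference between the truncated and untruncated versions as $o_p(1/l(T))$, which reduces to showing that the modification events (indicator vanishing, or clipping to $\{0,1\}$) have probability tending to zero. Once the consistency arguments of the first paragraph are in hand, this is automatic, and the remainder of the proof is a verbatim application of the delta method.
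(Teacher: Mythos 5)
Your proposal is correct and follows essentially the same route as the paper: identify $\hat{\phi}_T$ with a smooth function of $(\ha(N+1),\ha(N-1))$ whose denominator $\gamma(N+1)+\gamma(N-1)=-r(N)/\phi$ is nonzero by Lemma \ref{lemma:equations}, apply the delta method, and dispose of the indicator and the $[0,1]$-clipping via Slutsky's theorem. The only cosmetic difference is your sign convention for $f$, which, as you note, does not affect the quadratic form in the limiting variance.
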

\begin{proof}
For the simplicity of notation, in the proof we use the unbounded
version of the estimator $\hat{\phi}_T$. Since the true value of $\phi$
lies strictly between 0 and 1, the very same result holds also for the
bounded estimator of Definition \ref{estimator1}. Indeed, this is a
simple consequence of the Slutsky's theorem. To begin with, let us
define an auxiliary function $f$ by
\begin{equation*}
f(\pmb{x}) = f(x_1,x_2) = \frac{r(N)}{x_1 + x_2}
\mathbh{1}_{x_1+x_2
\neq0}.
\end{equation*}
If $x_1+x_2\neq0$, the function $f$ is smooth in a neighborhood of
$\pmb{x}$. Since $\gamma(N) = 0$ together with $r(N) \neq0$ implies
that $\gamma(N+1) + \gamma(N-1) \neq0$, we may apply the delta method
at $\pmb{x} = \pmb{\gamma}$ to obtain
%\myworries{kuuluuko miinus merkki ekan $=$ merkin jalkeen?}
%
\begin{equation*}
l(T) (\hat{\phi}_T - \phi ) = -l(T) \bigl(f(\hat{\pmb{\gamma
}}_T) - f(\pmb{\gamma}) \bigr)\overset{\text{law}} {\longrightarrow}
\mathcal{N} \bigl(\pmb{0}, \nabla f(\pmb{\gamma})^\top\varSigma\nabla
f(\pmb {\gamma}) \bigr),
\end{equation*}
where $\nabla f(\pmb{\gamma})$ is given by \eqref{deltaf}.
This concludes the proof.
\end{proof}
\begin{rem}
By writing
\begin{equation*}
\varSigma= %
\begin{bmatrix}
\sigma_X^2 &\sigma_{XY}\\
\sigma_{XY} & \sigma_Y^2
\end{bmatrix} %
\end{equation*}
the variance of the limiting random variable reads
\begin{equation*}
\frac{r(N)^2}{ (\gamma(N+1) + \gamma(N-1) )^4}\bigl(\sigma_X^2 + 2
\sigma_{XY} + \sigma_Y^2\bigr).
\end{equation*}
\end{rem}
\begin{rem}
In many cases the convergency rate is the best possible, that is $l(T)
= \sqrt{T}$. However, our results are valid with any rate function. One
might, for example in the case of many long memory processes, have
other convergency rates for the estimators $\ha(n)$.
\end{rem}

We continue by defining an estimator corresponding to the first part
(1) of the Corollary \ref{estimator2}. For this we assume that, for
reasons discussed in Section~\ref{model}, we have chosen the solution
\eqref{solution} (cf. Remark \ref{rem:minus} and Section~\ref{subsec:testing}).
As above, we show that consistency and asymptotic
normality follow from the same properties of the autocovariance
estimators. In the sequel we use a short notation
\begin{equation}
\label{g} g(\pmb{x}) = g(x_1,x_2,x_3)
= (x_1+x_3 )^2-4x_2
\bigl(x_2-r(N)\bigr).
\end{equation}
In addition, we denote
\[
\pmb{\gamma} = \bigl[\gamma(N+1), \gamma(N), \gamma(N-1) \bigr]^\top
\]
and
\[
\hat{\pmb{\gamma}}_T = \bigl[\ha(N+1),\ha(N),\ha(N-1)
\bigr]^\top.
\]

\begin{defi}
\label{estimaattori2}
Assume that $\gamma(N) \neq0$. We define an estimator for $\phi$
associated to \eqref{solution} by
\begin{equation}
\label{est2} \hat{\phi}_T = \frac{\hat{\gamma}_T(N+1) + \hat{\gamma}_T(N-1) + \sqrt
{g (\hat{\pmb{\gamma}}_T )}\mathbh{1}_{g (\bo_T
)>0}}{2\hat{\gamma}_T(N)}
\mathbh{1}_{\ha(N) \neq0}
\end{equation}
whenever the right-hand side lies on the interval $[0,1]$. If the
right-hand side is below zero, we set $\hat{\phi}_T = 0$ and if the
right-hand side is above one, we set $\hat{\phi}_T = 1$.
\end{defi}

\begin{theorem}
\label{cons2}
Assume that $\gamma(N)\neq0$ and $g(\pmb{\gamma})>0$. Furthermore,
assume that $\phi$ is given by \eqref{solution}. If $\hat{\pmb{\gamma
}}_T$ is consistent, then $\hat{\phi}_T$ is consistent.
\end{theorem}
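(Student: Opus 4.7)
The plan is to reduce the claim to a straightforward application of the continuous mapping theorem, with a small amount of bookkeeping to handle the indicator functions and the truncation to $[0,1]$. Define the auxiliary map
\begin{equation*}
f(\pmb{x}) = \frac{x_1 + x_3 + \sqrt{g(\pmb{x})}}{2x_2},
\end{equation*}
where $g$ is as in \eqref{g}. Under the hypotheses $\gamma(N)\neq 0$ and $g(\pmb{\gamma})>0$, the map $f$ is continuous in an open neighborhood $U$ of $\pmb{\gamma}$ on which simultaneously $x_2 \neq 0$ and $g(\pmb{x})>0$; moreover, $f(\pmb{\gamma}) = \phi$ by the assumption that $\phi$ is given by \eqref{solution}.

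First I would dispose of the indicators in \eqref{est2}. By the consistency of $\hat{\pmb{\gamma}}_T$, we have $\p(\hat{\pmb{\gamma}}_T \in U) \to 1$, so in particular $\p(\ha(N)\neq 0) \to 1$ and $\p(g(\hat{\pmb{\gamma}}_T)>0) \to 1$. Consequently the unbounded version
\begin{equation*}
\tilde{\phi}_T = \frac{\ha(N+1) + \ha(N-1) + \sqrt{g(\hat{\pmb{\gamma}}_T)}}{2\ha(N)}
\end{equation*}
agrees with the bracketed expression inside the truncation of $\hat{\phi}_T$ on an event of probability tending to one.

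Next I would apply the continuous mapping theorem to $f$, which is continuous at $\pmb{\gamma}$, together with $\hat{\pmb{\gamma}}_T \overset{\p}{\to} \pmb{\gamma}$, to conclude $\tilde{\phi}_T = f(\hat{\pmb{\gamma}}_T) \overset{\p}{\to} f(\pmb{\gamma}) = \phi$.

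Finally, I would handle the truncation to $[0,1]$ in the definition of $\hat{\phi}_T$. Since $\phi \in (0,1)$ strictly (by Corollary \ref{cor:stat} and the fact that $\phi = e^{-H}$ for some $H>0$), for any $\varepsilon>0$ small enough that $[\phi-\varepsilon, \phi+\varepsilon]\subset(0,1)$ we have $\p(|\tilde{\phi}_T-\phi|<\varepsilon) \to 1$, and on this event the truncation is inactive, so $\hat{\phi}_T = \tilde{\phi}_T$. Hence $\hat{\phi}_T \overset{\p}{\to} \phi$, completing the proof. There is no substantive obstacle here; the only point requiring care is the simultaneous verification that both indicator conditions are eventually satisfied with probability one, which is immediate from consistency and the openness of the set $\{x_2 \neq 0,\ g(\pmb{x})>0\}$ around $\pmb{\gamma}$.
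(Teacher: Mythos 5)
Your argument is correct and is essentially the paper's own proof: the paper simply invokes the continuous mapping theorem, noting that $g(\pmb{\gamma})>0$ makes the relevant map continuous at $\pmb{\gamma}$. Your additional bookkeeping for the indicators and the truncation to $[0,1]$ is a careful elaboration of the same idea, not a different route.
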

\begin{proof}
As $g(\pmb{\gamma})>0$, the result is again a simple consequence of the
continuous mapping theorem.
\end{proof}
Before proving the asymptotic normality, we present some short
notation. We set
\begin{equation}
\label{C} C_N = \frac{\gamma(N+1)+\gamma(N-1)+\sqrt{g(\pmb{\gamma})}}{\gamma(N)}
\end{equation}
and
\begin{equation}
\label{variance} %
\begin{split}\varSigma_\phi&=
\frac{1}{4\gamma(N)^2} \left( \bigl(\nabla\sqrt{g(\pmb {\gamma})}
\bigr)^\top\varSigma\nabla\sqrt{g(\pmb{\gamma})}+ 2 %
\begin{bmatrix}
1\\
-C_N\\
1
\end{bmatrix} %
^\top\varSigma \nabla
\sqrt{g(\pmb{\gamma})}
\right.\\
& \quad\left.+ %
\begin{bmatrix}
1\\
-C_N\\
1
\end{bmatrix} %
^\top
\varSigma %
\begin{bmatrix}
1\\
-C_N\\
1
\end{bmatrix} %
 \right), \end{split}
\end{equation}
where
\begin{equation*}
\nabla\sqrt{g(\pmb{\gamma})} = \frac{1}{\sqrt{g(\pmb{\gamma})}} %
\begin{bmatrix}
\gamma(N+1) + \gamma(N-1)\\
2(r(N) - 2\gamma(N))\\
\gamma(N+1) + \gamma(N-1)
\end{bmatrix} %
.
\end{equation*}

\begin{theorem}
\label{asym2}
Let the assumptions of Theorem \ref{cons2} prevail. If
\begin{equation*}
l(T) (\hat{\pmb{\gamma}}_T - \pmb{\gamma} )\overset{\text {law}} {
\longrightarrow} \mathcal{N} (\pmb{0}, \varSigma )
\end{equation*}
for some covariance matrix $\varSigma$ and some rate function $l(T)$, then
$l(T) (\hat{\phi}_T - \phi )$ is asymptotically normal with
zero mean and variance given by
\eqref{variance}.
%\begin{equation}
%\label{est2}
%\hat{\phi}_T = \frac{\hat{\gamma}_T(N+1) + \hat{\gamma}_T(N-1) \pm
%\sqrt{g\left(\hat{\pmb{\gamma}}_T\right)}\mathbbm{1}_{g\left(\bo_T
%\right)>0}}{2\hat{\gamma}_T(N)}\mathbbm{1}_{\ha(N) \neq0}
%\end{equation}
\end{theorem}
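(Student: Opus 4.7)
The plan is to apply the multivariate delta method to the smooth function
\[
h(\pmb{x}) = h(x_1,x_2,x_3) = \frac{x_1+x_3+\sqrt{g(\pmb{x})}}{2x_2},
\]
evaluated at $\pmb{x} = \pmb{\gamma}$. By construction, $\phi = h(\pmb{\gamma})$ and, up to the two indicators, $\hat{\phi}_T = h(\hat{\pmb{\gamma}}_T)$. The assumptions $\gamma(N)\neq 0$ and $g(\pmb{\gamma})>0$ guarantee that $h$ is continuously differentiable in an open neighborhood of $\pmb{\gamma}$, which is exactly what the delta method requires.

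First, as in the proof of Theorem \ref{asym3}, I would reduce to the unbounded version of the estimator: since the true $\phi$ lies in $(0,1)$, Slutsky's theorem shows that truncation at $0$ and $1$ does not affect the limiting distribution. Next, the two indicator factors in \eqref{est2} can be discarded. Indeed, consistency of $\hat{\pmb{\gamma}}_T$ together with $\gamma(N)\neq 0$ and $g(\pmb{\gamma})>0$ implies $\mathbh{1}_{\ha(N)\neq 0}\to 1$ and $\mathbh{1}_{g(\bo_T)>0}\to 1$ in probability, so another Slutsky argument reduces the analysis of $l(T)(\hat{\phi}_T - \phi)$ to that of $l(T)\bigl(h(\hat{\pmb{\gamma}}_T) - h(\pmb{\gamma})\bigr)$.

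The main computational step is evaluating $\nabla h(\pmb{\gamma})$. Differentiating gives
\[
\nabla h(\pmb{\gamma}) = \frac{1}{2\gamma(N)}\left( \begin{bmatrix} 1 \\ -C_N \\ 1 \end{bmatrix} + \nabla\sqrt{g(\pmb{\gamma})} \right),
\]
where the second summand follows directly from the chain rule applied to $g(\pmb{x}) = (x_1+x_3)^2-4x_2(x_2-r(N))$, and the $-C_N$ entry appears from differentiating the factor $1/(2x_2)$ in front, using $h(\pmb{\gamma}) = \phi$ and the definition \eqref{C} of $C_N$. Expanding the quadratic form $\nabla h(\pmb{\gamma})^\top \varSigma \nabla h(\pmb{\gamma})$ then yields precisely the three-term expression \eqref{variance}.

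Finally, the delta method gives
\[
l(T)\bigl(h(\hat{\pmb{\gamma}}_T) - h(\pmb{\gamma})\bigr) \overset{\text{law}}{\longrightarrow} \mathcal{N}\bigl(0,\, \nabla h(\pmb{\gamma})^\top \varSigma \nabla h(\pmb{\gamma})\bigr),
\]
which combined with the reductions above concludes the argument. There is no real obstacle; the only mildly delicate point is the bookkeeping for the gradient of $h$, since the $1/(2x_2)$ prefactor contributes the $-C_N$ middle entry that must be carefully combined with $\nabla\sqrt{g(\pmb{\gamma})}$ to recover the expression in \eqref{variance}.
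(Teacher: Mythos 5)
Your proposal is correct and follows essentially the same route as the paper: both arguments reduce to the unbounded estimator and discard the indicators via Slutsky, then apply the multivariate delta method, and your gradient $\nabla h(\pmb{\gamma}) = \frac{1}{2\gamma(N)}\bigl([1,-C_N,1]^\top + \nabla\sqrt{g(\pmb{\gamma})}\bigr)$ expands to exactly \eqref{variance}. The only cosmetic difference is that the paper first factors out $\frac{1}{2\hat{\gamma}_T(N)}$ and applies the delta method to the numerator with the correction term $-C_N x_2$ (handling the random prefactor by Slutsky), whereas you differentiate the full ratio directly; the resulting limit is identical.
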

\begin{proof}
The proof follows the same lines as the proof of Theorem \ref{asym3}
but for the reader's convenience, we present the details. Furthermore,
as in the proof of Theorem \ref{asym3}, since the true value of $\phi$
lies strictly between 0 and 1,
for the notational simplicity,
we may and will use the unbounded version of the estimator. Indeed, the
asymptotics for the bounded version then follow directly from the
Slutsky's theorem. We have\vadjust{\goodbreak}
\begin{align*}
%\begin{split}
& \biggl(\frac{\hat{\gamma}_T(N+1)\mathbh{1}_{\ha(N) \neq0}}{\ha(N)}- \frac{\gamma(N+1)}{\gamma(N)} \biggr)
\\
&\quad=\frac{1}{\ha(N)} \bigl(\ha(N+1)\mathbh{1}_{\ha(N) \neq0} - \gamma (N+1)
\bigr) + \biggl(\frac{\gamma(N+1)}{\ha(N)} - \frac{\gamma
(N+1)}{\gamma(N)} \biggr)
\\
&\quad= \frac{1}{\ha(N)} \biggl( \ha(N+1)\mathbh{1}_{\ha(N) \neq0} - \gamma (N+1) -
\frac{\gamma(N+1)}{\gamma(N)} \bigl(\ha(N)- \gamma(N) \bigr) \biggr).
%\end{split}
\end{align*}
Similarly
\begin{align*}
%\begin{split}
& \biggl(\frac{\hat{\gamma}_T(N-1)\mathbh{1}_{\ha(N) \neq0}}{\ha(N)}- \frac{\gamma(N-1)}{\gamma(N)} \biggr)
\\
&\quad= \frac{1}{\ha(N)} \biggl( \ha(N-1)\mathbh{1}_{\ha(N) \neq0} - \gamma (N-1) -
\frac{\gamma(N-1)}{\gamma(N)} \bigl(\ha(N)- \gamma(N) \bigr) \biggr)
%\end{split}
\end{align*}
and
\begin{align*}
%\begin{split}
& \biggl(\frac{\sqrt{g(\hat{\pmb{\gamma}}_T)}\mathbh{1}_{g (\bo
_T )>0}\mathbh{1}_{\ha(N) \neq0}}{\ha(N)}- \frac{\sqrt{g(\pmb
{\gamma})}}{\gamma(N)} \biggr)
\\
&\quad= \frac{1}{\ha(N)} \biggl( \sqrt{g(\hat{\pmb{\gamma}}_T)}\mathbh
{1}_{g (\bo_T )>0}\mathbh{1}_{\ha(N) \neq0} - \sqrt{g(\pmb {\gamma})} -
\frac{\sqrt{g(\pmb{\gamma})}}{\gamma(N)} \bigl(\ha(N)- \gamma(N) \bigr) \biggr).
%\end{split}
\end{align*}
For $C_N$ given in \eqref{C} we have
\begin{equation*}
\begin{split} l(T) (\hat{\phi}_T-\phi ) =&\
\frac{l(T)}{2\hat{\gamma
}(N)} \bigl(\ha(N+1)\iii- \gamma(N+1)
\\
&+ \ha(N-1)\iii- \gamma(N-1) -C_N \bigl(\ha(N)-\gamma(N) \bigr)
\\
&+ \sqrt{g(\hat{\pmb{\gamma }}_T)}\ii\iii- \sqrt{g(\pmb{\gamma})}
\bigr). \end{split} %
\end{equation*}
By defining
\begin{equation*}
h(\pmb{x}) = h(x_1,x_2,x_3) =
\bigl(x_1 + x_3 + \sqrt{g(\pmb{x})}\mathbh
{1}_{g(\pmb{x}) >0} \bigr)\mathbh{1}_{x_2\neq0} - C_N
x_2
\end{equation*}
we have
\begin{equation}
\label{slut} l(T) (\hat{\phi}_T-\phi ) = \frac{l(T)}{2\hat{\gamma
}_T(N)}
\bigl(h (\hat{\pmb{\gamma}}_T ) - h(\pmb{\gamma}) \bigr).
\end{equation}
If $x_2 \neq0$ and $g(\pmb{x}) > 0$, the function $h$ is smooth in a
neighborhood of $\pmb{x}$. Therefore we may apply the delta method at
$\pmb{x} = \pmb{\gamma}$ to obtain
\begin{equation*}
l(T) \bigl(h (\hat{\pmb{\gamma}}_T ) - h(\pmb{\gamma}) \bigr)
\overset{\text{law}} {\longrightarrow} \mathcal{N} \bigl(\pmb{0}, \nabla h(\pmb{
\gamma})^\top\varSigma\nabla h(\pmb{\gamma}) \bigr),
\end{equation*}
where
\begin{align*}
%\begin{split}
\nabla h(\pmb{\gamma})^\top\varSigma\nabla h(
\pmb{\gamma}) &= \left( %
\begin{bmatrix}
1\\
-C_N\\
1
\end{bmatrix} %
+ \nabla
\sqrt{g(\pmb{\gamma})} \right)^\top\varSigma \left( %
\begin{bmatrix}
1\\
-C_N\\
1
\end{bmatrix} %
+ \nabla\sqrt{g(\pmb{\gamma})} \right)
\\
&= \bigl(\nabla\sqrt{g(\pmb{\gamma})} \bigr)^\top\varSigma\nabla\sqrt
{g(\pmb{\gamma})} + 2 %
\begin{bmatrix}
1\\
-C_N\\
1
\end{bmatrix} %
^\top
\varSigma \nabla\sqrt{g(\pmb{\gamma})}
\\
&\quad + %
\begin{bmatrix}
1\\
-C_N\\
1
\end{bmatrix} %
^\top \varSigma
\begin{bmatrix}
1\\
-C_N\\
1
\end{bmatrix} %
. %\end{split}
\end{align*}
Hence \eqref{slut} and Slutsky's theorem imply that $l(T) (\hat{\phi
}_T - \phi )$ is asymptotically normal with zero mean and variance
given by \eqref{variance}.
\end{proof}
\begin{rem}
\label{rem:minus}
One straightforwardly observes the same limiting behavior as in
Theorems \ref{cons2} and \ref{asym2} for the estimator related to \eqref
{solution2}. This fact also can be used to determine which one of
Equations \eqref{solution} and \eqref{solution2} gives the correct $\phi
$ (cf. Section~\ref{subsec:testing}).
\end{rem}
\begin{rem}
\label{rem1}
If $\gamma(N) \neq0$ and $g(\pmb{\gamma}) = 0$ we may define an estimator
\begin{equation*}
\hat{\phi}_T = \frac{\ha(N+1) + \ha(N-1)}{2\ha(N)}\iii.
\end{equation*}
Assuming that
\begin{equation*}
l(T) (\hat{\pmb{\gamma}}_T - \pmb{\gamma} )\overset{\text {law}} {
\longrightarrow} \mathcal{N} (\pmb{0}, \varSigma )
\end{equation*}
it can be shown similarly as in the proofs of Theorems \ref{asym3} and
\ref{asym2} that
\begin{equation*}
 l(T) (\hat{\phi}_T - \phi ) \overset{\text {law}} {
\longrightarrow} \mathcal{N} \left(0, \frac{1}{4\gamma(N)^2} %
\begin{bmatrix}
1\\
-\frac{\gamma(N+1) + \gamma(N-1)}{\gamma(N)} \\
1
\end{bmatrix} %
^\top\varSigma %
\begin{bmatrix}
1\\
-\frac{\gamma(N+1) + \gamma(N-1)}{\gamma(N)} \\
1
\end{bmatrix} %
 \right)
\end{equation*}
\end{rem}

\begin{rem}
\label{rem2}
The estimator related to Theorem \ref{estimator1} reads
\begin{equation*}
\hat{\phi}_T = \frac{\hat{\gamma}_T(n+1)}{\hat{\gamma}_T(n)}\mathbh {1}_{\ha(n) \neq0},
\end{equation*}
where we assume that $\gamma(n)\neq0$.
By using the same techniques as earlier, it can be shown that if
\begin{equation*}
\small{ l(T) \bigl(\hat{\gamma}_T(n+1) - \gamma(n+1), \hat{
\gamma}_T(n)- \gamma (n) \bigr) \overset{\text{law}} {
\longrightarrow} \mathcal{N} \left( \pmb{0}, %
\begin{bmatrix}
\sigma_X^2 & \sigma_{XY} \\
\sigma_{XY} & \sigma_Y^2
\end{bmatrix}
 \right), }
\end{equation*}
then
\begin{equation*}
l(T) (\hat{\phi}_T - \phi ) \overset{\text {law}} {\longrightarrow}
\mathcal{N} \biggl(0, \frac{\sigma_X^2}{\gamma
(n)^2} + \frac{\gamma(n+1)^2}{\gamma(n)^4}
\sigma_Y^2 - 2\frac{\gamma
(n+1)}{\gamma(n)^3}\sigma_{XY}
\biggr).
\end{equation*}
\end{rem}

Note that the asymptotics given in Remarks \ref{rem1} and \ref{rem2}
hold also if one forces the corresponding estimators to the interval
$[0,1]$ as we did in Definitions \ref{estimaattori1} and \ref{estimaattori2}.

\iffalse
%
\begin{rem}
The asymptotics of the square root term of the estimator is given by
%
\begin{equation*}
\sqrt{T} \bigl(g(\hat{\pmb{\gamma}}_T) - g(\pmb{\gamma}) \bigr)
\overset {\text{law}} {\longrightarrow} \mathcal{N} \Bigl(\pmb{0}, \nabla g(\pmb {
\gamma})^\top\sum\nabla g(\pmb{\gamma}) \Bigr)
\end{equation*}
%
\end{rem}
%
\fi

\subsection{Testing the underlying assumptions}
\label{subsec:testing}
When choosing the estimator that corresponds the situation at hand, we
have to make assumptions related to the values of $\gamma(N)$ (for some
$N$) and $g(\pmb{\gamma})$. In addition, we have to consider the
question of the choice between \eqref{solution} and \eqref{solution2}.\vadjust{\goodbreak}

Let us first discuss how to test the null hypothesis that $\gamma(N) =
0$. If the null hypothesis holds, then by asymptotic normality of the
autocovariances, we have that
\begin{equation}
\label{law1} l(T)\ha(N)\overset{\text{law}} {\longrightarrow} \mathcal{N}
\bigl(0,\sigma^2 \bigr)
\end{equation}
with some $\sigma^2$. Hence we may use
\begin{equation*}
\ha(N) \sim_a \mathcal{N} \biggl(0, \frac{\sigma^2}{l(T)^2} \biggr)
\end{equation*}
as a test statistics. A similar approach can be applied also when
testing the null hypothesis that $g(\pmb{\gamma}) = 0$, where $g$ is
defined by \eqref{g}. The alternative hypothesis is of the form $g(\pmb
{\gamma}) > 0$. Assuming that the null hypothesis holds, we obtain by
the delta method that
\begin{equation*}
l(T) \bigl(g(\hat{\pmb{\gamma}}_T) - g(\pmb{\gamma}) \bigr)
\overset{\text {law}} {\longrightarrow} \mathcal{N} \bigl(0, \tilde{
\sigma}^2 \bigr)
\end{equation*}
for some $\tilde{\sigma}^2$ justifying the use of
\begin{equation*}
g(\hat{\pmb{\gamma}}_T) \sim_a \mathcal{N} \biggl(0,
\frac{\tilde{\sigma
}^2}{l(T)^2} \biggr)
\end{equation*}
as a test statistics. If the tests above suggest that $\gamma(N) \neq
0$ and $g(\pmb{\gamma})> 0$, then the choice of the sign can be based
on the discussion in Section~\ref{model}. Namely, if for the ratio $a_N = \frac
{r(N)}{\gamma(N)}$ it holds that $a_N \leq0$ or $a_N \geq1$, then the
sign is unambiguous. The sign of $\gamma(N)$ can be deduced from the
previous testing of the null hypothesis $\gamma(N) = 0$. By \eqref
{law1}, if necessary, one can test the null hypothesis $\gamma(N) =
r(N)$ using the test statistics
%
%\begin{equation*}
%\sqrt{T}\left(\frac{r(N)}{\ha(N)}-\frac{r(N)}{\gamma(N)}\right)
%\overset{\text{law}}{\longrightarrow} \mathcal{N}\left(0,\bar{\sigma}^2
%\right).
%\end{equation*}
%
\begin{equation*}
\ha(N) \sim_a \mathcal{N} \biggl(r(N), \frac{\sigma^2}{l(T)^2} \biggr),
\end{equation*}
where the alternative hypothesis is of the form $\frac{r(N)}{\gamma(N)}
< 1$. Finally, assume that one wants to test if the null hypothesis
$a_N = a_k$ holds. By the delta method we obtain that
\begin{equation*}
l(T) (\hat{a}_N-\hat{a}_k - a_N
+a_k )\overset{\text {law}} {\longrightarrow} \mathcal{N} \bigl(0,
\bar{\sigma}^2 \bigr)
\end{equation*}
for some $\bar{\sigma}^2$ suggesting that
\begin{equation*}
\hat{a}_N-\hat{a}_k \sim_a \mathcal{N}
\biggl(0, \frac{\bar{\sigma
}^2}{l(T)^2} \biggr)
\end{equation*}
could be utilized as a test statistics.

\section{Simulations}
\label{simulations}
We present a simulation study to assess the finite sample performance
of the estimators. In the simulations, we apply the estimator
corresponding to the first part (1) of Corollary \ref{estimator2}. %In
%this section we give a graphical representation for a subset of
%simulations of Appendix C.
%The simulations are designed to portray the variance increasing
%factors of the most general estimator \eqref{estimator2}.
%In the simulations, we consider the most general estimator
%\eqref{estimator2}.
We simulate data from AR$(1)$ processes and ARMA$(1,2)$ processes with
$\theta_1 = 0.8$ and $\theta_2 = 0.3$ as the MA parameters. (Note that
these processes correspond to Examples \ref{exar} and \ref{exarma}.) We
assess the effects of the sample size $T$, AR$(1)$ parameter $\varphi$,
and the chosen lag $N$. We consider the sample sizes $T=50, 500, 5000,
50000$, lags $N=1, 2, 3, \ldots, 10$, and the true parameter values
$\varphi= 0.1, 0.2, 0.3, \ldots, 0.9$. For each combination, we simulate
1000 draws. The sample means of the obtained estimates are tabulated in
Appendix~\ref{simutables}.

Histograms given in Figures \ref{samplesize}, \ref{parameter} and \ref
{lag} reflect the effects of the sample size $T$, AR$(1)$ parameter
$\varphi$, and the chosen lag $N$, respectively. In Figure \ref
{samplesize}, the parameter $\varphi= 0.5$ and the lag $N = 3$. In
Figure \ref{parameter}, the sample size $T = 5000$ and the lag $N = 3$.
In Figure \ref{lag}, the parameter $\varphi= 0.5$ and the sample size
$T = 5000$. The summary statistics corresponding to the data displayed
in the histograms are given in Appendix~\ref{simutables}.

%\textcolor{blue}{I would remove the red one as it is somewhat
%irrelevant in a sense, but if I remember Pauliina wants to keep it so
%I did not erase. At least maybe some other place would be better than
%this..}\textcolor{red}{One could also consider estimation when the
%autocovariance of the noise is not known "correctly". However, since
%the pair $(\phi, Z)$ in \eqref{form} is not unique, this would make no
%sense. In fact, the autocovariance of the noise is "wrong" only if it
%does not yield a feasible $\phi$ by the equations of Corollary
%\ref{estimator2}.} \\
Figure \ref{samplesize} exemplifies the rate of convergence of the
estimator as the number of observations grows. One can see that with
the smallest sample size, the lower bound is hit numerous times due to
the large variance of the estimator. In the upper series of the
histograms, the standard deviation reduces from $0.326$ to $0.019$,
whereas in the lower series it reduces from $0.250$ to $0.008$. The
faster convergence in the case of ARMA$(1,2)$ can be explained with the
larger value of $\gamma(3)$ reducing the variance in comparison to the
AR$(1)$ case. The same phenomenon recurs also in the other two figures.

Figure \ref{parameter} reflects the effect of the AR$(1)$ parameter on
the value of $\gamma(3)$ and consequently on the variance of the
estimator. The standard deviation reduces from $0.322$ to $0.020$ in
the case of AR$(1)$ and from $0.067$ to $0.009$ in the case of ARMA$(1,2)$.

In Figure \ref{lag} one can see how an increase in the lag increases
the variance of the estimator. In the topmost sequence, the standard
deviation increases from $0.014$ to $0.326$ and in the bottom sequence
from $0.015$ to $0.282$.

We wish to emphasize that in general smaller lag does not imply smaller
variance, since the autocovariance function of the observed process is
not necessarily decreasing. In addition, although the autocovariance
$\gamma(N)$ appears to be the dominant factor when it comes to the
speed of convergence, there are also other possibly significant terms
involved in the limit distribution of Theorem \ref{asym2}.

%\begin{figure}[h!]
%\centering
% \includegraphics[width = 0.95\textwidth]{ARjaARMAotoskoko.eps}
% \caption{The effect of the sample size.}
% \label{otoskoonmerkitys}
%\end{figure}

\begin{figure}[h!]
\includegraphics{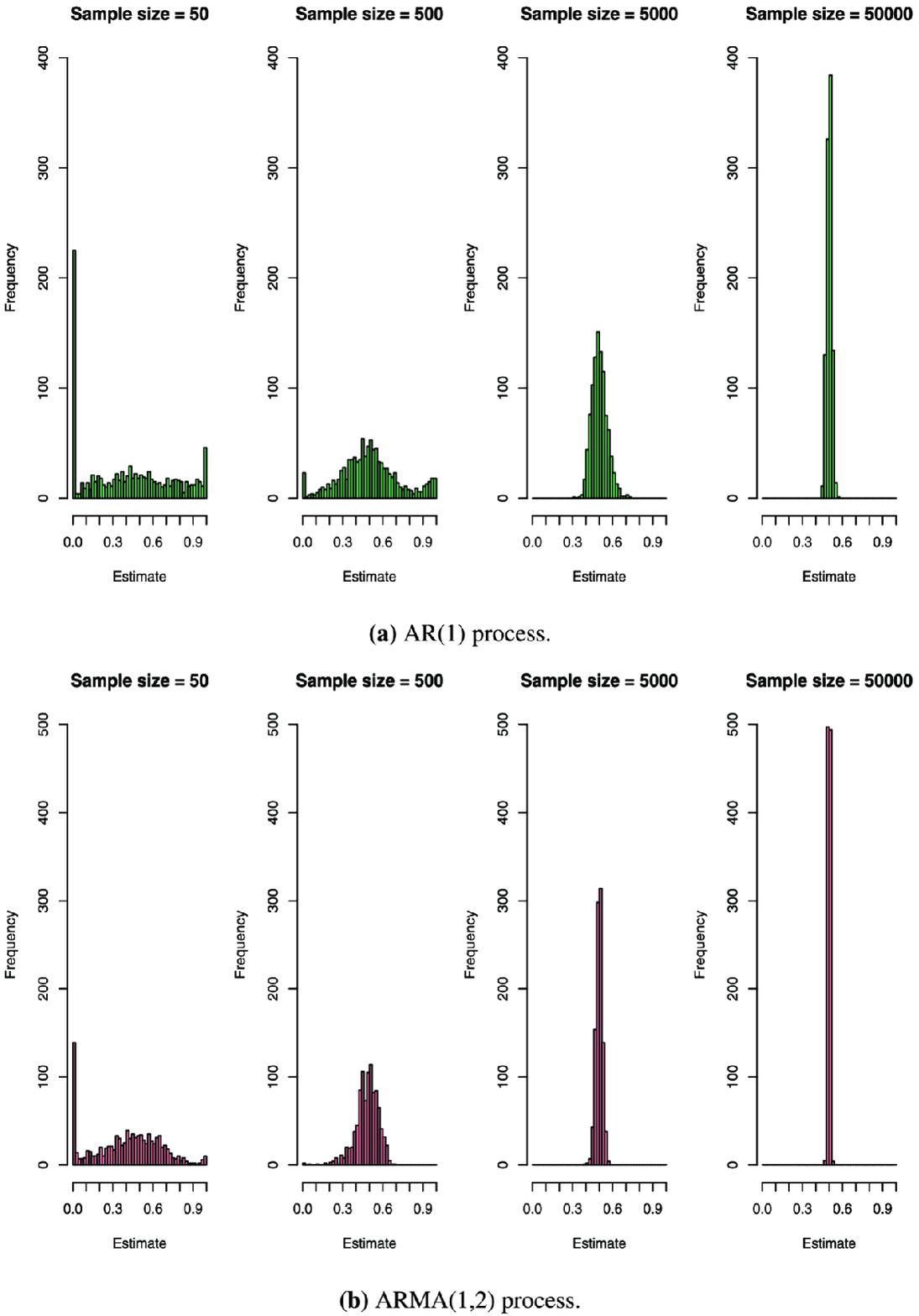}
\caption{The effect of the sample size $T$ on the estimates
$\hat{\varphi} = \hat{\phi}$. The true parameter value
$\varphi= 0.5$ and the lag $N = 3$. The number of iterations is 1000}
\label{samplesize}
\label{ARsample}
\label{ARMAsample}
\end{figure}

\begin{figure}[h!]
\includegraphics{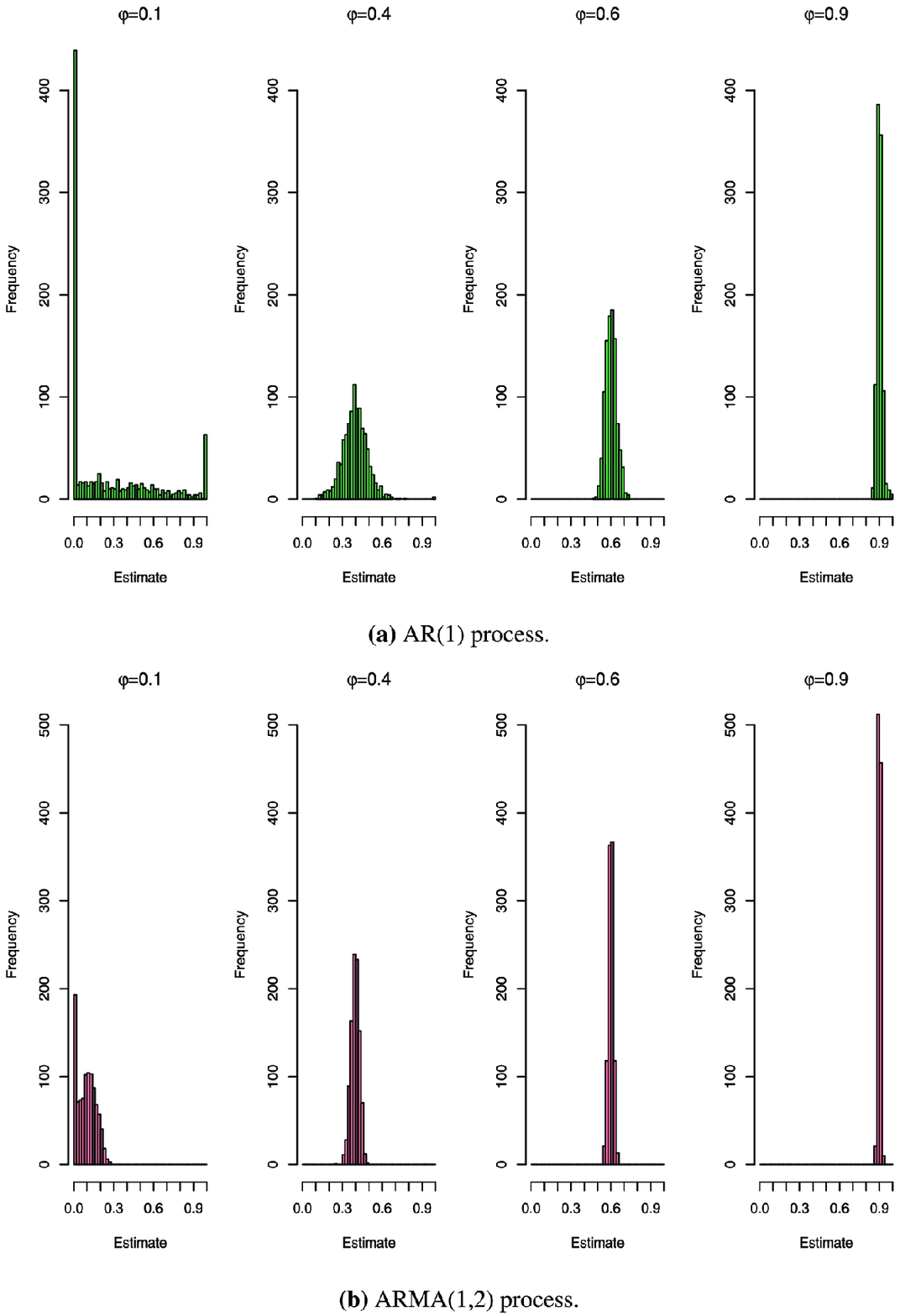}
\caption{The effect of the true parameter value
$\varphi$ on the estimates $\hat{\varphi} = \hat{\phi}$.
The sample size $T=5000$ and the lag $N = 3$. The number of iterations
is 1000}
\label{parameter}
\label{ARparameter}
\label{ARMAparameter}
\end{figure}

\begin{figure}[h!]
\includegraphics{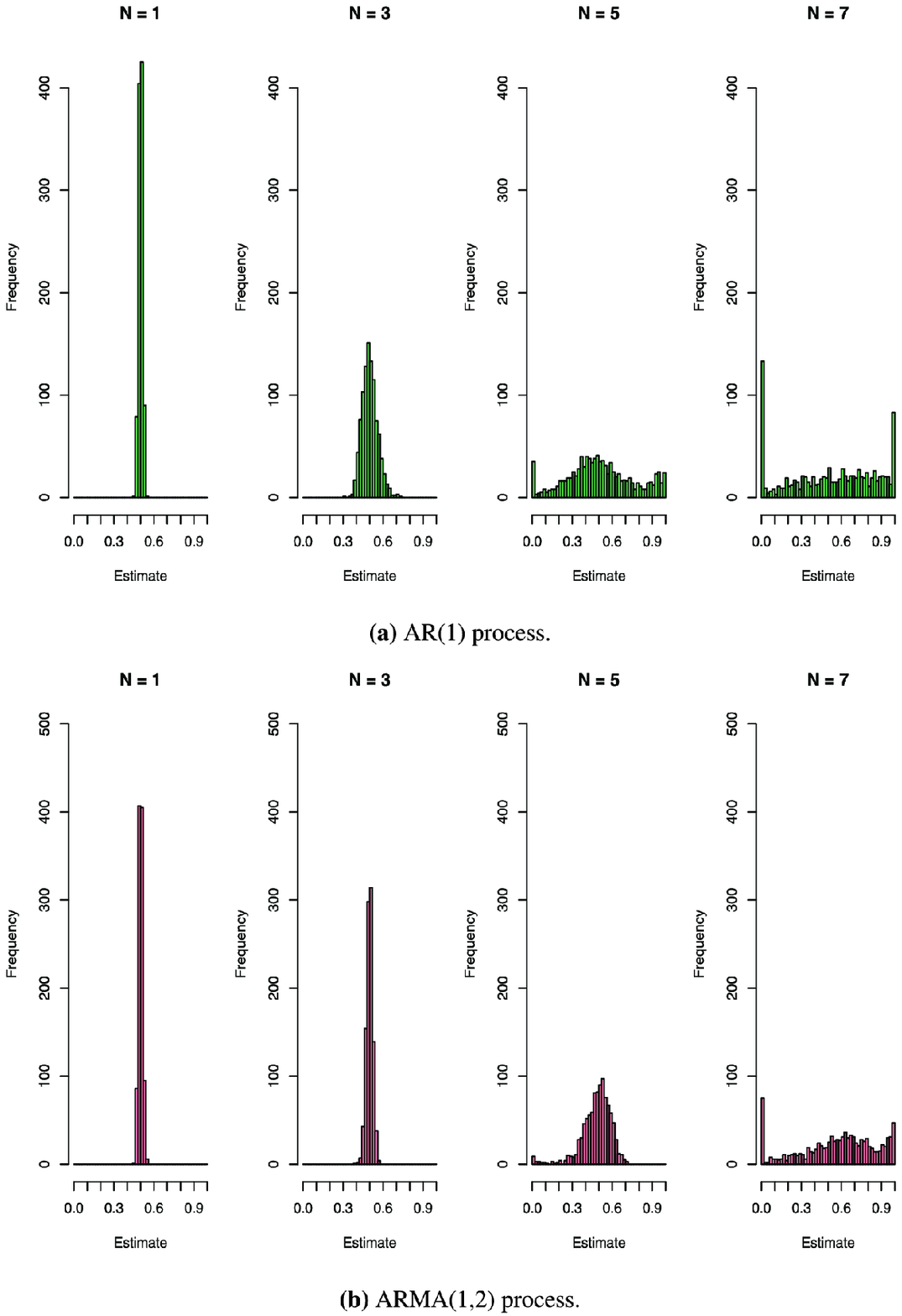}
\caption{The effect of the lag $N$ on the estimates
$\hat{\varphi} = \hat{\phi}$. The sample size $T=5000$ and the
true parameter value $\varphi= 0.5$. The number of iterations is 1000}
\label{lag}
\label{ARlag}
\label{ARMAlag}
\end{figure}

\begin{appendix}
\section{Proof of Theorem \ref{main}}\label{techproof}
We provide here a detailed proof of Theorem \ref{main}. The continuous
time version of the theorem was recently proved in \citep{asymptotics}
and we loosely follow the same lines in our proof for the discrete time version.
\begin{defi}
Let $H>0$. A discrete time stochastic process $Y = (Y_{e^t})_{t\in
\mathbb{Z}}$ with $\lim_{t\to-\infty} Y_{e^t} = 0$ is H-self-similar if
\begin{equation*}
(Y_{e^{t+s}} )_{t\in\mathbb{Z}} \overset{\text{law}} {=}
\bigl(e^{sH} Y_{e^t} \bigr)_{t\in\mathbb{Z}}
\end{equation*}
for every $s\in\mathbb{Z}$ in the sense of finite-dimensional distributions.
\end{defi}

\begin{defi}
Let $H>0$. In addition, let $X = (X_t)_{t\in\mathbb{Z}}$ and $Y =
(Y_{e^t})_{t\in\mathbb{Z}}$ be stochastic processes. We define the
discrete Lamperti transform by
\begin{equation*}
(\mathcal{L}_H X)_{e^t} = e^{tH}X_{t}
\end{equation*}
and its inverse by
\begin{equation*}
\bigl(\mathcal{L}_{H}^{-1} Y\bigr)_t =
e^{-tH}Y_{e^t}.
\end{equation*}
\end{defi}

\begin{theorem}[Lamperti \cite{lamperti1962semi}]
\label{lamperti}
If $X = (X_t)_{t\in\mathbb{Z}}$ is strictly stationary, then $(\mathcal
{L}_H X)_{e^t}$ is H-self-similar. Conversely, if $Y= (Y_{e^t})_{t\in
\mathbb{Z}}$ is H-self-similar, then $(\mathcal{L}_H^{-1} Y)_t$ is
strictly stationary.
\end{theorem}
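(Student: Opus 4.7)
The proof is essentially a matter of unwinding the two definitions and checking that the Lamperti transforms convert the defining invariance of one class into the defining invariance of the other. My plan is to handle the two implications separately and in parallel, with one small side-check for the decay-at-$-\infty$ condition included in the definition of H-self-similarity.

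For the forward direction, suppose $X=(X_t)_{t\in\mathbb{Z}}$ is strictly stationary, and set $Y_{e^t}=(\mathcal{L}_HX)_{e^t}=e^{tH}X_t$. Fix $s\in\mathbb{Z}$ and a finite index set $t_1,\dots,t_k\in\mathbb{Z}$. Then
\begin{equation*}
(Y_{e^{t_j+s}})_{j=1}^k = \bigl(e^{(t_j+s)H}X_{t_j+s}\bigr)_{j=1}^k = e^{sH}\bigl(e^{t_jH}X_{t_j+s}\bigr)_{j=1}^k,
\end{equation*}
and by strict stationarity $(X_{t_j+s})_{j=1}^k \stackrel{\text{law}}{=}(X_{t_j})_{j=1}^k$, so the right-hand side agrees in law with $e^{sH}(e^{t_jH}X_{t_j})_{j=1}^k=e^{sH}(Y_{e^{t_j}})_{j=1}^k$. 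This is exactly the finite-dimensional self-similarity identity. The decay requirement $\lim_{t\to-\infty}Y_{e^t}=0$ is then immediate: strict stationarity forces $X_t\stackrel{\text{law}}{=}X_0$ for every $t$, hence $(X_t)$ is tight, and multiplication by $e^{tH}\to 0$ yields $Y_{e^t}=e^{tH}X_t\to 0$ in probability.

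For the converse, suppose $Y=(Y_{e^t})_{t\in\mathbb{Z}}$ is H-self-similar and set $X_t=(\mathcal{L}_H^{-1}Y)_t=e^{-tH}Y_{e^t}$. Fix $s\in\mathbb{Z}$ and $t_1,\dots,t_k\in\mathbb{Z}$. Then
\begin{equation*}
(X_{t_j+s})_{j=1}^k = \bigl(e^{-(t_j+s)H}Y_{e^{t_j+s}}\bigr)_{j=1}^k,
\end{equation*}
and the self-similarity of $Y$ gives $(Y_{e^{t_j+s}})_{j=1}^k\stackrel{\text{law}}{=}(e^{sH}Y_{e^{t_j}})_{j=1}^k$. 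Substituting this equality in law, the factors $e^{-sH}$ and $e^{sH}$ cancel and we obtain $(X_{t_j+s})_{j=1}^k\stackrel{\text{law}}{=}(e^{-t_jH}Y_{e^{t_j}})_{j=1}^k=(X_{t_j})_{j=1}^k$. Since this holds for every shift $s$ and every finite collection of indices, $X$ is strictly stationary.

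Both halves are therefore completed by the same one-line manipulation, with the only subtlety being that in the forward direction one must separately verify the boundary condition at $-\infty$; since strict stationarity makes the marginals of $X$ identical, tightness is automatic and there is no real obstacle. I do not expect any truly hard step in this proof — it is a direct verification, and the main care is simply to keep track of which process is being transformed and in which direction the exponential prefactor acts.
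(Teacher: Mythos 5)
Your proof is correct: both directions are the standard one-line unwinding of the definitions (a deterministic coordinatewise rescaling preserves equality of finite-dimensional laws), and your tightness argument for the boundary condition $\lim_{t\to-\infty}e^{tH}X_t=0$ in probability is exactly right since $X_t\stackrel{\text{law}}{=}X_0$ gives a uniform tail bound. Note, however, that the paper does not prove this statement at all --- it is quoted as Lamperti's theorem with a citation to his 1962 article --- so there is no in-paper proof to compare against; your direct verification is the expected argument and fills that gap correctly.
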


\iffalse
%
\begin{kor}
\label{multi}
Assume that $(X_{e^n})_{n\in\mathbb{Z}}$ is H-self-similar and let us
denote $\Delta_n X_{e^n} = X_{e^n} - X_{e^{n-1}}$. Then, for every $n,
l\in\mathbb{Z}$ it holds that
%
\begin{equation*}
\Delta_{n-l} X_{e^{n-l}} \overset{\text{law}} {=}
e^{-Hl}\Delta_n X_{e^n}.
\end{equation*}
%
\begin{proof}
Since there is one-to-one correspondence between H-self-similar and
stationary processes, for some stationary $U$ it holds that
%
\begin{equation*}
%
\begin{split} X_{e^{n}} - X_{e^{n-1}} &=
e^{Hn}U_n - e^{H(n-1)}U_{n-1}
\\
&\overset{\text{law}} {=} e^{Hn} U_{n-1} - e^{H(n-1)}
U_{n-2}
\\
&= e^H ( X_{e^{n-1}} - X_{e^{n-2}}). \end{split}
%
\end{equation*}
%
Repeating the step above $l$ times concludes the proof.
\end{proof}
%
\end{kor}
%
\fi

\begin{lemma}
\label{Y}
Let $H>0$ and assume that $(Y_{e^t})_{t\in\mathbb{Z}}$ is
H-self-similar. Let us denote $\Delta_t Y_{e^t} = Y_{e^t} -
Y_{e^{t-1}}$. Then the process $(G_t)_{t\in\mathbb{Z}}$ defined by
\begin{equation}
G_t = \lleft\{ %
\begin{array}{@{}ll}
\sum_{k=1}^t e^{-kH} \Delta_k Y_{e^k}, \quad&t\geq1 \\
0, \quad&t=0 \\
-\sum_{k=t+1}^0 e^{-kH} \Delta_k Y_{e^k}, \quad&t\leq-1
\end{array} %
\rright.
\end{equation}
belongs to $\mathcal{G}_H$.
\begin{proof}
By studying the cases $t\geq2, t=1, t=0$ and $t\leq-1$ separately, it
is straightforward to see that
\begin{equation}
\label{difference} \Delta_t G = e^{-tH} \Delta_t
Y_{e^t}\quad\text{for every}\ t\in\mathbb{Z}.
\end{equation}
Now
\[
\lim_{k\to-\infty} \sum_{t = k}^0
e^{tH} \Delta_t G = \lim_{k\to
-\infty} \sum
_{t=k}^0 \Delta_t
Y_{e^t} = Y_{e^0} - \lim_{k\to-\infty}
Y_{e^k}
\]
and since $Y$ is self-similar, we have
\[
Y_{e^k} \law e^{kH} Y_{e^0}.
\]
Thus
\[
\lim_{k\to-\infty} Y_{e^k} = 0
\]
in distribution, and hence also in probability. This implies that
\[
\sum_{t = -\infty}^0 e^{tH}
\Delta_t G
\]
is an almost surely finite random variable. Next we show that $G$ has
strictly stationary increments. For this, assume that $t,s,l \in\mathbb
{Z}$ with $t>s$ are arbitrary. Then
\begin{equation*}
\begin{split} G_t - G_s &= \sum
_{k=s+1}^t \Delta_k G= \sum
_{k=s+1}^t e^{-kH}\Delta_k
Y_{e^k} = \sum_{j = s+l+1}^{t+l}
e^{-(j-l)H}\Delta_{j-l} Y_{e^{j-l}}
\\
&\overset{\text{law}} {=} \sum_{j = s+l+1}^{t+l}
e^{-jH} \Delta_j Y_{e^j} = G_{t+l} -
G_{s+l}, \end{split} %
\end{equation*}
where the equality in law follows from $H$-self-similarity of
$(Y_{e^t})_{t\in\mathbb{Z}}$. Treating $n$-dimensional vectors
similarly concludes the proof.
\end{proof}
\end{lemma}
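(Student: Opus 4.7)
The plan is to verify, in turn, the three clauses in the definition of $\mathcal{G}_H$: $G_0 = 0$, the existence in probability of the limit in \eqref{limit} as an almost surely finite random variable, and the strictly stationary increment property of $G$. The first clause is immediate from the middle branch of the piecewise definition. For the other two, the key preliminary step is to establish the unifying identity
\begin{equation*}
\Delta_t G = e^{-tH}\Delta_t Y_{e^t}\quad\text{for every } t\in\mathbb{Z}.
\end{equation*}
This requires a short case analysis splitting into $t \geq 2$, $t=1$, $t=0$, and $t \leq -1$ so that the boundary terms of the three branches glue correctly; once it is in hand, both remaining clauses reduce to statements about $Y$.

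For the limit, multiplying the identity by $e^{tH}$ gives $e^{tH}\Delta_t G = \Delta_t Y_{e^t}$, so the sum in \eqref{limit} telescopes to $Y_{e^0} - Y_{e^{k-1}}$. By $H$-self-similarity, $Y_{e^{k-1}} \law e^{(k-1)H}Y_{e^0}$, which converges to $0$ in distribution (and therefore in probability, since the target is constant) as $k \to -\infty$. Hence the limit exists in probability and equals $Y_{e^0}$, which is almost surely finite.

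For the strictly stationary increment property, I would fix $t > s$ and $l \in \mathbb{Z}$, expand
\begin{equation*}
G_t - G_s = \sum_{k=s+1}^t e^{-kH}\Delta_k Y_{e^k},
\end{equation*}
reindex via $j = k+l$, and compare with $G_{t+l}-G_{s+l} = \sum_{j=s+l+1}^{t+l} e^{-jH}\Delta_j Y_{e^j}$. The equality in law of these sums follows by applying $H$-self-similarity to the finite-dimensional vector $(Y_{e^{k}})_{s \leq k \leq t}$ shifted by $l$, which yields $(e^{-(j-l)H}\Delta_{j-l}Y_{e^{j-l}})_j \law (e^{-jH}\Delta_j Y_{e^j})_j$ as joint distributions. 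The same argument, applied to an arbitrary tuple of disjoint increments, upgrades this to equality of all finite-dimensional distributions of $(\Delta_t G)_{t\in\mathbb{Z}}$ under time-shifts.

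The main obstacle is the uniform identity $\Delta_t G = e^{-tH}\Delta_t Y_{e^t}$: because the definition of $G$ is piecewise, one must check that the differences at the junctions $t=1$ and $t=0$ match the general formula. Once this bookkeeping is done, the telescoping argument and the self-similarity-based shift argument are essentially mechanical, and the only technical care needed afterwards is to run the stationarity argument at the level of joint distributions rather than for a single increment.
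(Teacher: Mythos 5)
Your proposal is correct and follows essentially the same route as the paper's proof: the same case-by-case verification of the identity $\Delta_t G = e^{-tH}\Delta_t Y_{e^t}$, the same telescoping argument combined with $Y_{e^k}\law e^{kH}Y_{e^0}\to 0$ for the limit condition, and the same reindexing-plus-self-similarity argument (extended to finite-dimensional distributions) for strict stationarity of the increments. The only difference is cosmetic bookkeeping (e.g.\ writing the telescoped boundary term as $Y_{e^{k-1}}$ rather than $Y_{e^k}$), which does not affect the argument.
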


\begin{proof}[Proof of Theorem \ref{main}]
Assume first that $X$ is strictly stationary. In this case $X$ clearly
satisfies the limit condition. In addition, there exists a
H-self-similar $Y$ such that
\begin{equation*}
\begin{split} \Delta_{t} X &= e^{-tH}
Y_{e^t} - e^{-(t-1)H}Y_{e^{t-1}}
\\
&= \bigl(e^{-H} - 1 \bigr)e^{-(t-1)H} Y_{e^{t-1}} +
e^{-tH}(Y_{e^t} - Y_{e^{t-1}})
\\
&= \bigl(e^{-H} - 1 \bigr)X_{t-1} + e^{-tH}
\Delta_t Y_{e^t}. \end{split} %
\end{equation*}
Defining the process $G$ as in Lemma \ref{Y} completes the proof of the
`if' part.
For the proof of the `only if' part, assume that $G\in\mathcal{G}_H$.
From \eqref{langevin} it follows that
\begin{equation*}
\begin{split} X_t &= e^{-H}X_{t-1} +
\Delta_tG = e^{-2H}X_{t-2} + e^{-H}
\Delta_{t-1}G + \Delta_t G
\\
&=\sum_{j=0}^n e^{-jH}
\Delta_{t-j} G + e^{-(n+1)H}X_{t-n-1}
\\
&= e^{-tH} \Biggl(\sum_{k = t-n}^t
e^{kH} \Delta_k G + e^{(t-n-1)H}X_{t-n-1}
\Biggr) \end{split} %
\end{equation*}
for every $n\in\mathbb{N}$. Since $G\in\mathcal{G}_H$ and $\lim_{m\to
-\infty} e^{mH} X_{m} = 0$ in probability, we obtain that
\begin{equation*}
\label{Xform} X_t = e^{-tH} \sum
_{k= -\infty}^t e^{kH} \Delta_k G
\end{equation*}
for every $t\in\mathbb{Z}$. Now, by strictly stationary increments of
$G$, we have
\begin{equation*}
e^{-tH}\sum_{j=-M}^t
e^{jH}\Delta_{j+s} G \overset{\text{law}} {=}
e^{-tH}\sum_{j=-M}^t
e^{jH} \Delta_j G.
\end{equation*}
for every $t, M\in\mathbb{Z}$ such that $-M \leq t$.
Since the sums above converge as $M$ tends to infinity, we obtain
%By changing the index of summation and using the stationary increment
%property of $G$ we obtain that
%
\begin{equation*}
\begin{split} X_{t+s} = e^{-(t+s)H} \sum
_{ j =-\infty}^t e^{(j+s)H} \Delta_{j+s} G
\overset{\text{law}} {=} e^{-tH}\sum_{j=-\infty}^t
e^{jH} \Delta_j G = X_t. \end{split}
\end{equation*}
Treating multidimensional distributions similarly we thus observe that
$X$ is strictly stationary.
Finally, to prove the uniqueness assume there exist $G_1, G_2\in\mathcal
{G}_H$ such that
\begin{equation*}
\begin{split} e^{tH} X_t = \sum
_{k=-\infty}^t e^{kH} \Delta_k
G_1 = \sum_{k=-\infty
}^t
e^{kH} \Delta_k G_2 \end{split} %
\end{equation*}
for every $t\in\mathbb{Z}$. Then
\begin{equation*}
e^{tH} X_t - e^{(t-1)H}X_{t-1} =
e^{tH} \Delta_t G_1 = e^{tH}
\Delta_t G_2.
\end{equation*}
Hence $\Delta_t G_1 = \Delta_t G_2$ for every $t\in\mathbb{Z}$ implying
that $G_1 = G_2 + c$. Since both processes are zero at $t=0$, it must
hold that $c = 0$.
\end{proof}

\begin{rem}
Corollary \ref{cor:stat} is almost trivial. However, it is well
motivated by Theorem \ref{main}. On the other hand, Theorem \ref{main}
is far away from trivial as it states both sufficient\vadjust{\goodbreak} and necessary
conditions. We prove Theorem \ref{main} using discrete Lamperti
transform. In principle, one could consider proving Theorem \ref{main}
by starting from Corollary \ref{cor:stat}. However, at this point, we
have not assumed any moment conditions, and thus it is not clear
whether a process $G$ constructed from $Z^{(H)}$ of Corollary \ref
{cor:stat} would satisfy $G\in\mathcal{G}_H$. Indeed, a counter example
is provided in \cite[Proposition 2.1.]{asymptotics}. See also \cite
[Theorem 2.2.]{asymptotics}, where moment conditions are discussed.
\end{rem}

\section{Discussion on special cases}\label{worstcase}

In this appendix we take a closer look at ``worst case scenario''
processes related to the choice between \eqref{solution} and \eqref
{solution2}. These are such processes that, for some $0<a<1$, $a_j = a$
for every $j\in\mathbb{Z}$. By \eqref{quadratic} this is equivalent to
\begin{equation}
\label{b19} \frac{\gamma(j+1)+\gamma(j-1)}{\gamma(j)} = b
\end{equation}
for every $j\in\mathbb{Z}$, where $\phi<b<\phi+ \frac{1}{\phi}$. In
order to study processes of this form, we consider formal power series.

\begin{defi}
Let
\begin{equation*}
f(x) = \sum_{n=0}^\infty
c_nx^n
\end{equation*}
be a formal power series in $x$. We now define the coefficient
extractor operator $[\cdot]\{*\}$ by
\begin{equation*}
\bigl[x^m\bigr]\bigl\{f(x)\bigr\} = c_m
\end{equation*}
\end{defi}
Setting $j=0$ in \eqref{b19} we obtain that $\gamma(1) = \frac{b}{2}\gamma
(0)$. This leads to the following recursion.
\begin{equation}
\label{eq:recursion} \gamma(n) = b\gamma(n-1) - \gamma(n-2)\quad\text{for}\ n\geq2.
\end{equation}
It follows immediately from the first step of the recursion that $b>2$
does not define an autocovariance function of a stationary process.
Note also that for $b=2$ Equation \eqref{eq:recursion} implies that
$\gamma(n)=\gamma(0)$ for every $n\in\mathbb{Z}$. This corresponds to
the completely degenerate process $X_n = X_0$. We next study the case
$0 <b<2$.
For this, we define a generating function regarded as a formal power
series by
\begin{equation}
\label{generating} f(x) = \sum_{n=0}^\infty
\gamma(n) x^n.
\end{equation}
Then the coefficients of $f(x)$ satisfy
\begin{equation*}
\begin{split} \bigl[x^n\bigr]\bigl\{f(x)\bigr\} &= b
\bigl[x^{n-1}\bigr]\bigl\{f(x)\bigr\} - \bigl[x^{n-2}\bigr]\bigl
\{f(x)\bigr\}
\\
&=\bigl[x^n\bigr]\bigl\{bxf(x)\bigr\} - \bigl[x^n\bigr]
\bigl\{x^2f(x)\bigr\}
\\
&= \bigl[x^n\bigr]\bigl\{bxf(x)-x^2f(x)\bigr\}
\end{split} %
\end{equation*}
for $n\geq2$. For simplicity, we assume that $\gamma(0) = 1$. By taking
the constant and the first order terms into account we obtain
\begin{equation*}
f(x) = bxf(x)-x^2f(x)-bx+1+\frac{b}{2}x,\vadjust{\goodbreak}
\end{equation*}
which implies
\begin{equation*}
f(x) = \frac{1-\frac{b}{2}x}{x^2-bx+1}.
\end{equation*}
Since the function above is analytic at $x=0$, the corresponding power
series expansion is \eqref{generating}. Furthermore, since the
recursion formula is linear, for a general $\gamma(0)$ it holds that
\begin{equation*}
\gamma(n) = \gamma(0)\bigl[x^n\bigr] \Biggl\{ \biggl(1-
\frac{b}{2}x \biggr)\sum_{n=0}^\infty
\bigl(bx-x^2\bigr)^n \Biggr\}.
\end{equation*}

%It is not clear which values of $b\in(0,2)$ yield to a valid
%autocovariance function of a stationary process, or do such values
%even exist.%

%However, the case $b=2$ corresponds the degenerated process $X_t = X$
%for every $t\in\mathbb{Z}$.

\section{Tables}\label{simutables}
The simulation results highlighted in Section~\ref{simulations} are
chosen from a more extensive set of simulations. All the simulation
results are given in a tabulated form in this appendix. The two
processes considered in the simulations are AR$(1)$ and ARMA$(1,2)$.
The used MA parameters are $\theta_1 = 0.8$ and $\theta_2 = 0.3$. The
tables represent the efficiency dependence of the estimator on the
AR$(1)$ parameter $\varphi$ and the used lag $N$. We have varied the
column variable AR$(1)$ parameter from $0.1$ to $0.9$ and the row
variable lag from $1$ to $10$. The tables display the sample means of
the estimates from $1000$ iterations with different sample sizes. At
the end of this appendix, we provide summary statistics tables
corresponding to the histograms presented in Section~\ref{simulations}.

\begin{table}[ht]
%\centering
%
\begin{tabular}{rrrrrrrrrr}
\hline
$N/\varphi$& 0.1 & 0.2 & 0.3 & 0.4 & 0.5 & 0.6 & 0.7 & 0.8 & 0.9 \\
\hline
1 & 0.10 & 0.18 & 0.27 & 0.38 & 0.48 & 0.57 & 0.67 & 0.77 & 0.85 \\
2 & 0.25 & 0.26 & 0.30 & 0.35 & 0.45 & 0.54 & 0.64 & 0.74 & 0.82 \\
3 & 0.32 & 0.35 & 0.35 & 0.40 & 0.41 & 0.48 & 0.57 & 0.69 & 0.80 \\
4 & 0.30 & 0.37 & 0.42 & 0.47 & 0.50 & 0.52 & 0.55 & 0.66 & 0.77 \\
5 & 0.33 & 0.39 & 0.42 & 0.50 & 0.53 & 0.57 & 0.61 & 0.65 & 0.75 \\
6 & 0.34 & 0.37 & 0.42 & 0.47 & 0.56 & 0.60 & 0.66 & 0.69 & 0.74 \\
7 & 0.32 & 0.37 & 0.43 & 0.49 & 0.57 & 0.60 & 0.68 & 0.69 & 0.75 \\
8 & 0.32 & 0.34 & 0.45 & 0.51 & 0.57 & 0.64 & 0.69 & 0.72 & 0.76 \\
9 & 0.31 & 0.37 & 0.44 & 0.50 & 0.59 & 0.64 & 0.70 & 0.73 & 0.78 \\
10 & 0.34 & 0.35 & 0.43 & 0.51 & 0.58 & 0.64 & 0.70 & 0.75 & 0.78 \\
\hline
\end{tabular}
\caption{The sample means of the parameter estimates $\hat{\varphi} =
\hat{\phi}$ for AR$(1)$ processes with different parameter values
$\varphi$ using lags $N=1, 2, 3, \ldots, 10$. The sample size is 50 and
the number of iterations is 1000}\vspace*{-12pt}
\end{table}

\begin{table}[ht]
%\centering
%
\begin{tabular}{rrrrrrrrrr}
\hline
$N/\varphi$& 0.1 & 0.2 & 0.3 & 0.4 & 0.5 & 0.6 & 0.7 & 0.8 & 0.9 \\
\hline
1 & 0.10 & 0.20 & 0.30 & 0.40 & 0.50 & 0.60 & 0.70 & 0.80 & 0.90 \\
2 & 0.23 & 0.24 & 0.30 & 0.40 & 0.51 & 0.60 & 0.70 & 0.80 & 0.91 \\
3 & 0.29 & 0.31 & 0.34 & 0.40 & 0.50 & 0.61 & 0.71 & 0.81 & 0.91 \\
4 & 0.32 & 0.37 & 0.40 & 0.40 & 0.49 & 0.61 & 0.70 & 0.81 & 0.90 \\
5 & 0.30 & 0.37 & 0.42 & 0.48 & 0.50 & 0.58 & 0.70 & 0.81 & 0.90 \\
6 & 0.30 & 0.36 & 0.44 & 0.47 & 0.53 & 0.58 & 0.68 & 0.80 & 0.90 \\
7 & 0.30 & 0.37 & 0.44 & 0.49 & 0.53 & 0.57 & 0.65 & 0.79 & 0.90 \\
8 & 0.32 & 0.39 & 0.44 & 0.51 & 0.57 & 0.61 & 0.68 & 0.76 & 0.90 \\
9 & 0.30 & 0.38 & 0.45 & 0.51 & 0.59 & 0.63 & 0.68 & 0.75 & 0.89 \\
10 & 0.32 & 0.39 & 0.46 & 0.52 & 0.58 & 0.64 & 0.70 & 0.75 & 0.89 \\
\hline
\end{tabular}
\caption{The sample means of the parameter estimates $\hat{\varphi} =
\hat{\phi}$ for AR$(1)$ processes with different parameter values
$\varphi$ using lags $N=1, 2, 3, \ldots, 10$. The sample size is 500 and
the number of iterations is 1000}\vspace*{-24pt}
\end{table}

\begin{table}[ht!]
%\centering
%
\begin{tabular}{rrrrrrrrrr}
\hline
$N/\varphi$& 0.1 & 0.2 & 0.3 & 0.4 & 0.5 & 0.6 & 0.7 & 0.8 & 0.9 \\
\hline
1 & 0.10 & 0.20 & 0.30 & 0.40 & 0.50 & 0.60 & 0.70 & 0.80 & 0.90 \\
2 & 0.13 & 0.20 & 0.30 & 0.40 & 0.50 & 0.60 & 0.70 & 0.80 & 0.90 \\
3 & 0.26 & 0.27 & 0.32 & 0.40 & 0.50 & 0.60 & 0.70 & 0.80 & 0.90 \\
4 & 0.30 & 0.32 & 0.34 & 0.42 & 0.51 & 0.61 & 0.70 & 0.80 & 0.90 \\
5 & 0.29 & 0.37 & 0.38 & 0.43 & 0.51 & 0.62 & 0.71 & 0.80 & 0.90 \\
6 & 0.31 & 0.37 & 0.41 & 0.45 & 0.49 & 0.62 & 0.71 & 0.81 & 0.90 \\
7 & 0.29 & 0.38 & 0.40 & 0.47 & 0.52 & 0.59 & 0.72 & 0.81 & 0.90 \\
8 & 0.29 & 0.40 & 0.45 & 0.51 & 0.54 & 0.58 & 0.71 & 0.81 & 0.91 \\
9 & 0.32 & 0.37 & 0.41 & 0.50 & 0.54 & 0.60 & 0.68 & 0.82 & 0.91 \\
10 & 0.29 & 0.37 & 0.41 & 0.51 & 0.57 & 0.61 & 0.68 & 0.82 & 0.91 \\
\hline
\end{tabular}
\caption{The sample means of the parameter estimates $\hat{\varphi} =
\hat{\phi}$ for AR$(1)$ processes with different parameter values
$\varphi$ using lags $N=1, 2, 3, \ldots, 10$. The sample size is 5000 and
the number of iterations is 1000}
\end{table}

\begin{table}[ht!]
%\centering
%
\begin{tabular}{rrrrrrrrrr}
\hline
$N/\varphi$ & 0.1 & 0.2 & 0.3 & 0.4 & 0.5 & 0.6 & 0.7 & 0.8 & 0.9 \\
\hline
1 & 0.10 & 0.20 & 0.30 & 0.40 & 0.50 & 0.60 & 0.70 & 0.80 & 0.90 \\
2 & 0.10 & 0.20 & 0.30 & 0.40 & 0.50 & 0.60 & 0.70 & 0.80 & 0.90 \\
3 & 0.21 & 0.21 & 0.30 & 0.40 & 0.50 & 0.60 & 0.70 & 0.80 & 0.90 \\
4 & 0.28 & 0.30 & 0.33 & 0.40 & 0.50 & 0.60 & 0.70 & 0.80 & 0.90 \\
5 & 0.29 & 0.34 & 0.36 & 0.41 & 0.51 & 0.60 & 0.70 & 0.80 & 0.90 \\
6 & 0.29 & 0.37 & 0.39 & 0.42 & 0.52 & 0.60 & 0.70 & 0.80 & 0.90 \\
7 & 0.29 & 0.37 & 0.44 & 0.45 & 0.51 & 0.61 & 0.70 & 0.80 & 0.90 \\
8 & 0.31 & 0.37 & 0.43 & 0.48 & 0.49 & 0.62 & 0.70 & 0.80 & 0.90 \\
9 & 0.31 & 0.35 & 0.43 & 0.49 & 0.53 & 0.60 & 0.71 & 0.80 & 0.90 \\
10 & 0.32 & 0.37 & 0.42 & 0.48 & 0.53 & 0.58 & 0.72 & 0.80 & 0.90 \\
\hline
\end{tabular}
\caption{The sample means of the parameter estimates $\hat{\varphi} =
\hat{\phi}$ for AR$(1)$ processes with different parameter values
$\varphi$ using lags $N=1, 2, 3, \ldots, 10$. The sample size is 50000 and
the number of iterations is 1000}
\end{table}

\begin{table}[ht!]
%\centering
%
\begin{tabular}{rrrrrrrrrr}
\hline
$N/\varphi$& 0.1 & 0.2 & 0.3 & 0.4 & 0.5 & 0.6 & 0.7 & 0.8 & 0.9 \\
\hline
1 & 0.08 & 0.14 & 0.22 & 0.32 & 0.41 & 0.52 & 0.61 & 0.72 & 0.81 \\
2 & 0.09 & 0.13 & 0.20 & 0.30 & 0.39 & 0.50 & 0.60 & 0.72 & 0.82 \\
3 & 0.32 & 0.33 & 0.32 & 0.34 & 0.40 & 0.46 & 0.58 & 0.71 & 0.81 \\
4 & 0.65 & 0.66 & 0.62 & 0.60 & 0.60 & 0.56 & 0.60 & 0.68 & 0.78 \\
5 & 0.64 & 0.67 & 0.69 & 0.70 & 0.69 & 0.69 & 0.69 & 0.70 & 0.77 \\
6 & 0.64 & 0.68 & 0.69 & 0.72 & 0.74 & 0.75 & 0.76 & 0.75 & 0.78 \\
7 & 0.64 & 0.67 & 0.70 & 0.72 & 0.77 & 0.78 & 0.79 & 0.79 & 0.80 \\
8 & 0.65 & 0.67 & 0.71 & 0.72 & 0.76 & 0.79 & 0.81 & 0.80 & 0.83 \\
9 & 0.63 & 0.68 & 0.72 & 0.74 & 0.78 & 0.80 & 0.82 & 0.83 & 0.84 \\
10 & 0.65 & 0.68 & 0.70 & 0.74 & 0.78 & 0.80 & 0.83 & 0.85 & 0.85 \\
\hline
\end{tabular}
\caption{The sample means of the parameter estimates $\hat{\varphi} =
\hat{\phi}$ for ARMA$(1,2)$ processes with different parameter values
$\varphi$ using lags $N=1, 2, 3, \ldots, 10$. The MA parameters $\theta_1
= 0.8$ and $\theta_2 = 0.3$, the sample size is 50 and the number of
iterations is 1000}\vspace*{48pt}
\end{table}

\begin{table}[ht!]
%\centering
%
\begin{tabular}{rrrrrrrrrr}
\hline
$N/\varphi$& 0.1 & 0.2 & 0.3 & 0.4 & 0.5 & 0.6 & 0.7 & 0.8 & 0.9 \\
\hline
1 & 0.09 & 0.19 & 0.29 & 0.39 & 0.49 & 0.59 & 0.69 & 0.80 & 0.90 \\
2 & 0.09 & 0.19 & 0.28 & 0.39 & 0.49 & 0.59 & 0.69 & 0.79 & 0.90 \\
3 & 0.12 & 0.18 & 0.26 & 0.37 & 0.48 & 0.59 & 0.69 & 0.79 & 0.90 \\
4 & 0.58 & 0.49 & 0.38 & 0.37 & 0.45 & 0.57 & 0.69 & 0.79 & 0.90 \\
5 & 0.64 & 0.65 & 0.62 & 0.57 & 0.52 & 0.56 & 0.68 & 0.79 & 0.90 \\
6 & 0.65 & 0.68 & 0.67 & 0.68 & 0.66 & 0.61 & 0.67 & 0.79 & 0.90 \\
7 & 0.66 & 0.68 & 0.69 & 0.71 & 0.72 & 0.69 & 0.69 & 0.78 & 0.90 \\
8 & 0.66 & 0.68 & 0.71 & 0.72 & 0.75 & 0.73 & 0.72 & 0.78 & 0.90 \\
9 & 0.66 & 0.68 & 0.71 & 0.74 & 0.76 & 0.75 & 0.76 & 0.77 & 0.90 \\
10 & 0.65 & 0.68 & 0.71 & 0.74 & 0.77 & 0.78 & 0.78 & 0.78 & 0.89 \\
\hline
\end{tabular}
\caption{The sample means of the parameter estimates $\hat{\varphi} =
\hat{\phi}$ for ARMA$(1,2)$ processes with different parameter values
$\varphi$ using lags $N=1, 2, 3, \ldots, 10$. The MA parameters $\theta_1
= 0.8$ and $\theta_2 = 0.3$, the sample size is 500 and the number of
iterations is 1000}\vspace*{-9pt}
\end{table}

\begin{table}[ht!]
%\centering
%
\begin{tabular}{rrrrrrrrrr}
\hline
$N/\varphi$ & 0.1 & 0.2 & 0.3 & 0.4 & 0.5 & 0.6 & 0.7 & 0.8 & 0.9 \\
\hline
1 & 0.10 & 0.20 & 0.30 & 0.40 & 0.50 & 0.60 & 0.70 & 0.80 & 0.90 \\
2 & 0.10 & 0.20 & 0.30 & 0.40 & 0.50 & 0.60 & 0.70 & 0.80 & 0.90 \\
3 & 0.10 & 0.19 & 0.30 & 0.40 & 0.50 & 0.60 & 0.70 & 0.80 & 0.90 \\
4 & 0.34 & 0.21 & 0.27 & 0.39 & 0.50 & 0.60 & 0.70 & 0.80 & 0.90 \\
5 & 0.61 & 0.55 & 0.40 & 0.37 & 0.48 & 0.60 & 0.70 & 0.80 & 0.90 \\
6 & 0.65 & 0.65 & 0.62 & 0.50 & 0.48 & 0.59 & 0.70 & 0.80 & 0.90 \\
7 & 0.63 & 0.68 & 0.67 & 0.63 & 0.58 & 0.57 & 0.70 & 0.80 & 0.90 \\
8 & 0.64 & 0.68 & 0.68 & 0.71 & 0.69 & 0.60 & 0.69 & 0.80 & 0.90 \\
9 & 0.65 & 0.69 & 0.69 & 0.73 & 0.71 & 0.67 & 0.68 & 0.80 & 0.90 \\
10 & 0.64 & 0.67 & 0.71 & 0.75 & 0.74 & 0.74 & 0.69 & 0.80 & 0.90 \\
\hline
\end{tabular}
\caption{The sample means of the parameter estimates $\hat{\varphi} =
\hat{\phi}$ for ARMA$(1,2)$ processes with different parameter values
$\varphi$ using lags $N=1, 2, 3, \ldots, 10$. The MA parameters $\theta_1
= 0.8$ and $\theta_2 = 0.3$, the sample size is 5000 and the number of
iterations is 1000}\vspace*{-9pt}
\end{table}

\begin{table}[ht!]
%\centering
%
\begin{tabular}{rrrrrrrrrr}
\hline
$N/\varphi$& 0.1 & 0.2 & 0.3 & 0.4 & 0.5 & 0.6 & 0.7 & 0.8 & 0.9 \\
\hline
1 & 0.10 & 0.20 & 0.30 & 0.40 & 0.50 & 0.60 & 0.70 & 0.80 & 0.90 \\
2 & 0.10 & 0.20 & 0.30 & 0.40 & 0.50 & 0.60 & 0.70 & 0.80 & 0.90 \\
3 & 0.10 & 0.20 & 0.30 & 0.40 & 0.50 & 0.60 & 0.70 & 0.80 & 0.90 \\
4 & 0.13 & 0.19 & 0.30 & 0.40 & 0.50 & 0.60 & 0.70 & 0.80 & 0.90 \\
5 & 0.56 & 0.30 & 0.28 & 0.39 & 0.50 & 0.60 & 0.70 & 0.80 & 0.90 \\
6 & 0.62 & 0.60 & 0.41 & 0.37 & 0.50 & 0.60 & 0.70 & 0.80 & 0.90 \\
7 & 0.63 & 0.65 & 0.63 & 0.46 & 0.47 & 0.60 & 0.70 & 0.80 & 0.90 \\
8 & 0.64 & 0.66 & 0.68 & 0.63 & 0.49 & 0.59 & 0.70 & 0.80 & 0.90 \\
9 & 0.62 & 0.67 & 0.69 & 0.71 & 0.60 & 0.58 & 0.70 & 0.80 & 0.90 \\
10 & 0.65 & 0.67 & 0.71 & 0.73 & 0.70 & 0.59 & 0.70 & 0.80 & 0.90 \\
\hline
\end{tabular}
\caption{The sample means of the parameter estimates $\hat{\varphi} =
\hat{\phi}$ for ARMA$(1,2)$ processes with different parameter values
$\varphi$ using lags $N=1, 2, 3, \ldots, 10$. The MA parameters $\theta_1
= 0.8$ and $\theta_2 = 0.3$, the sample size is 50000 and the number of
iterations is 1000}\vspace*{-9pt}
\end{table}

\begin{table}[ht!]
%\centering
%
\begin{tabular}{lccccccc}
\hline
$T$ & max & min & mean & median & sd & mad & skewness \\
\hline
50 &1.00 & 0.00 & 0.413 & 0.409 & 0.326 & 0.436 & 0.222 \\
500 & 0.999 & 0.00 & 0.502 & 0.495 & 0.218 & 0.187 & 0.207 \\
5000 & 0.726 & 0.319 & 0.501 & 0.497 & 0.058 & 0.056 & 0.456 \\
50000 & 0.561 & 0.443 & 0.501 & 0.502 & 0.019 & 0.019 & -0.058 \\
\hline
\end{tabular}
\caption{The effect of the sample size $T$ on the estimates $\hat
{\varphi} = \hat{\phi}$ for an AR$(1)$ process. The true parameter
value $\varphi= 0.5$ and the lag $N = 3$. The number of iterations is 1000}\vspace*{-24pt}
\end{table}

\begin{table}[ht!]
%\centering
%
\begin{tabular}{lccccccc}
\hline
$T$ & max & min & mean & median & sd & mad & skewness \\
\hline
50 &0.999 & 0.00 & 0.399 & 0.425 & 0.250 & 0.264 & -0.062 \\
500 & 0.681 & 0.00 & 0.481 & 0.491 & 0.086 & 0.078 & -1.020 \\
5000 & 0.570 & 0.395 & 0.499 & 0.500 & 0.024 & 0.023 & -0.201 \\
50000 & 0.527 & 0.474 & 0.500 & 0.500 & 0.008 & 0.007 & 0.036 \\
\hline
\end{tabular}
\caption{The effect of the sample size $T$ on the estimates $\hat
{\varphi} = \hat{\phi}$ for an ARMA$(1,2)$ process. The MA parameters
$\theta_1 = 0.8$ and $\theta_2 = 0.3$, the true parameter value
$\varphi= 0.5$ and the lag $N = 3$. The number of iterations is 1000}
\end{table}

\begin{table}[ht!]
%\centering
%
\begin{tabular}{lccccccc}
\hline
$\varphi$ & max & min & mean & median & sd & mad & skewness \\
\hline
0.1 & 1.00 & 0.00 & 0.257 & 0.097 & 0.322 & 0.144 & 1.056 \\
0.4 & 0.989 & 0.111 & 0.396 & 0.395 & 0.096 & 0.083 & 0.500 \\
0.6 & 0.738 & 0.476 & 0.602 & 0.601 & 0.041 & 0.043 & 0.211 \\
0.9 & 1.00 & 0.852 & 0.901 & 0.899 & 0.020 & 0.018 & 0.943 \\
\hline
\end{tabular}
\caption{The effect of the true parameter value $\varphi$ on the
estimates $\hat{\varphi} = \hat{\phi}$ for AR$(1)$ processes. The
sample size $T=5000$ and the lag $N = 3$. The number of iterations is 1000}
\end{table}

\begin{table}[ht!]
%\centering
%
\begin{tabular}{lccccccc}
\hline
$\varphi$ & max & min & mean & median & sd & mad & skewness \\
\hline
0.1 & 0.273 & 0.00 & 0.096 & 0.098 & 0.067 & 0.082 & 0.144 \\
0.4 & 0.496 & 0.254 & 0.396 & 0.397 & 0.032 & 0.032 & -0.198 \\
0.6 & 0.650 & 0.540 & 0.600 & 0.600 & 0.018 & 0.019 & -0.061 \\
0.9 & 0.929 & 0.868 & 0.899 & 0.899 & 0.009 & 0.009 & -0.076 \\
\hline
\end{tabular}
\caption{The effect of the true parameter value $\varphi$ on the
estimates $\hat{\varphi} = \hat{\phi}$ for ARMA$(1,2)$ processes. The
MA parameters $\theta_1 = 0.8$ and $\theta_2 = 0.3$, the sample size
$T=5000$ and the lag $N = 3$. The number of iterations is 1000}
\end{table}

\begin{table}[ht!]
%\centering
%
\begin{tabular}{lccccccc}
\hline
$N$ & max & min & mean & median & sd & mad & skewness \\
\hline
1 & 0.550 & 0.457 & 0.501 & 0.501 & 0.014 & 0.015 & 0.017 \\
3 & 0.726 & 0.319 & 0.501 & 0.497 & 0.058 & 0.056 & 0.456 \\
5 & 1.00 & 0.00 & 0.513 & 0.493 & 0.246 & 0.226 & 0.098 \\
7 & 1.00 & 0.00 & 0.525 & 0.558 & 0.326 & 0.395 & -0.216\\
\hline
\end{tabular}
\caption{The effect of the lag $N$ on the estimates $\hat{\varphi} =
\hat{\phi}$ for an AR$(1)$ process. The sample size $T=5000$ and the
true parameter value $\varphi= 0.5$. The number of iterations is 1000}
\end{table}

\begin{table}[ht!]
%\centering
%
\begin{tabular}{lccccccc}
\hline
$N$ & max & min & mean & median & sd & mad & skewness \\
\hline
1 & 0.548 & 0.455 & 0.500 & 0.500 & 0.015 & 0.016 & 0.134 \\
3 & 0.570 & 0.395 & 0.499 & 0.500 & 0.024 & 0.023 & -0.201 \\
5 & 0.710 & 0.00 & 0.482 & 0.499 & 0.112 & 0.092 & -1.456 \\
7 & 1.00 & 0.00 & 0.576 & 0.613 & 0.282 & 0.275 & -0.488\\
\hline
\end{tabular}
\caption{The effect of the lag $N$ on the estimates $\hat{\varphi} =
\hat{\phi}$ for an ARMA$(1,2)$ process. The MA parameters $\theta_1 =
0.8$ and $\theta_2 = 0.3$, the sample size $T=5000$ and the true
parameter value $\varphi= 0.5$. The number of iterations is 1000}
\end{table}
\end{appendix}

\end{document}